\documentclass[reqno]{amsart}
\usepackage{amssymb,amsmath,amsthm,mathrsfs, array}
\usepackage{amsfonts,latexsym,amsxtra,amscd}
\usepackage{pgfplots, tikz}
\usepackage{hyperref}
\usepackage{graphicx, stmaryrd}
\usepackage[centering]{geometry}

\def\bc{\begin{center}}
	\def\ec{\end{center}}
\def\be{\begin{equation}}
	\def\ee{\end{equation}}

\def\N{\mathbb N}

\def\R{\mathbb R}

\newcommand\hdim{\dim_{\mathrm H}}

\newtheorem{thm}{Theorem}[section]
\newtheorem{prop}[thm]{Proposition}
\newtheorem{cor}[thm]{Corollary}
\newtheorem{lem}[thm]{Lemma}

\newtheorem{rem}[thm]{Remark}

\numberwithin{equation}{section}

\begin{document}
	
	\title[ Irrationality Exponents and Partial Quotient Growth in Continued Fractions]
	{Irrationality Exponents and Partial Quotient Growth in Continued Fractions}
	
	\thanks{{\it 2010 Mathematics Subject Classification: Primary 11K50, Secondary 11J70, 28A80}}
	\thanks{{\it Key words: Continued fractions, Irrationality exponents, Partial quotients, Hausdorff dimension}}

	\author[W. Cheng]{Wanjin Cheng}
	\address[Wanjin Cheng]{School of Mathematics, South China University of Technology, Guangzhou, 510640, China}
	\email{chengwj0227@163.com}
	\author[J. Feng]{Jing Feng$^{\ast}$}
	\address[Jing Feng]{School of Mathematics and Systems Science, \& Center for Mathematical Sciences, Wuhan University of Science and Technology, Wuhan, 430081, P. R. China}
	\email{jing.feng@wust.edu.cn}
	\thanks{$^{\ast}$Corresponding author.}
	\date{}
	\maketitle{}
	
	\begin{abstract}
		Let $[a_1(x),a_2(x),\ldots,a_n(x),\ldots]$ be the continued fraction expansion of irrational $x\in[0,1)$, and let $q_n(x)$ be the denominator of the $n$-th convergent. In this paper, we study how the growth rate of $a_{n+1}(x)$ on a prescribed logarithmic size interacts with its upper growth rate relative to $q_n(x)$. For $\psi:\mathbb N\to\mathbb{R}_{\ge0}$ satisfying $\psi(n)\to \infty$ and $\alpha, \beta\in [0, \infty]$,  define the joint level set
		\[F_{\alpha,\beta}:=\Big\{x\in [0,1)\colon \liminf_{n\to\infty}\frac{\log (a_{n+1}(x))}{\psi(n)}=\alpha,\  \limsup_{n\to\infty}\frac{\log (a_{n+1}(x))}{\log q_n(x)}=\beta\Big\}.
		\]
		We determine the Hausdorff dimension of $F_{\alpha,\beta}$ for all values of $ \alpha$ and $\beta$. Our results is related to several earlier results on the metric theory of continued fractions, including those of Bugeaud [Math. Ann. {327} (2003)] on irrationality exponents, Wang--Wu [Adv. Math. {218} (2008)] on the growth of partial quotients, and Song--Tan--Zhang [Nonlinearity {37} (2024)] on the joint distribution of convergence and irrationality exponents.
	\end{abstract}

	\bigskip
	\section{Introduction}
	Metric Diophantine approximation studies the size of sets of
	real numbers that admit rational approximations of a prescribed quality. As a direct consequence of Dirichlet's theorem, for every irrational number $x\in\R$,
	\begin{equation}\label{dirichlet cor}
		\Bigl|x-\frac pq\Bigr|<\frac1{q^2}
	\end{equation}
	has infinitely many solutions $(p,q)\in\mathbb Z\times\mathbb N$. A fundamental metric counterpart is  Khintchine theorem \cite{Khinchin24}: if $\psi: \mathbb{N}\to\R_{\ge0}$ is an  increasing function with $\psi\to\infty$, the set of $x\in\R$ for which
	\[\Bigl|x-\frac pq\Bigr|<\frac {1}{q\psi(q)}\] holds infinitely often has either zero or full Lebesgue
	measure according as $\sum_{q\geq1} \frac{1}{ \psi(q)}$ converges or diverges. For any $v\ge2$, Jarn\'ik \cite{Jarnik29} and Besicovitch \cite{Besicovitch} independently proved that the Hausdorff dimenison of the set
	\[
	W_v
	:=\biggl\{x\in[0,1):
	\Bigl|x-\frac pq\Bigr|<\frac{1}{q^{v}}~
	\text{ for infinitely many }(p,q)\in\mathbb Z\times\mathbb N
	\biggr\},
	\]
	is 	$\frac2v$.
	This leads to the study of the irrationality exponent
	\begin{equation}\label{irr exponent}
		v(x):=\sup\left\{v>0:
		x\in W_v
		\right\}.
	\end{equation}
	By \eqref{dirichlet cor}, $v(x)\geq2$ for every irrational $x$, while $v(x)=2$ for Lebesgue
	almost every $x\in [0,1)$.

	Continued fractions provide an important tool in metric Diophantine approximation. Denote by
	\[
	x=[a_1(x),a_2(x),\ldots]
	\]
	the continued fraction expansion of an irrational
	$x\in[0,1)$, and let $p_n(x)/q_n(x)$ be its $n$-th convergent. The standard estimates (\cite{Khi64})
	\[
	\frac{1}{(a_{n+1}(x)+2)q^2_n(x)}
	<\left|x-\frac{p_n(x)}{q_n(x)}\right|
	<\frac{1}{a_{n+1}(x)q^2_n(x)}
	\]
	show that the irrationality exponent defined in \eqref{irr exponent} can be represented by
	\begin{equation}\label{irr-limsup}
		v(x)
		=2+\limsup_{n\to\infty}
		\frac{\log a_{n+1}(x)}{\log q_n(x)}.
	\end{equation}
	Good \cite{Good41} showed that for every \(\beta>0\),
	\begin{equation}\label{JARNIK-1}
		{J}_\beta:=\Big\{x\in [0,1):\  a_{n+1}(x)\geq q^\beta_n(x) \ \text{for infinitely many }n\in \mathbb{N}\Big\},
	\end{equation}
	has Hausdorff dimension $\frac{2}{\beta+2}$. By a result of Bugeaud \cite{Bugeaud}, the corresponding exact level set
	\begin{equation}\label{dim of Fbeta}
		F_{\beta}:=\biggl\{x\in[0,1):\limsup_{n\to\infty}
		\frac{\log a_{n+1}(x)}{\log q_n(x)}=\beta\biggr\}
	\end{equation}
	has the  Hausdorff dimension $\frac{2}{\beta+2}$. Another characteristic quantity associated with the partial quotients is their convergence exponent.  For an irrational number
	$x=[a_1(x),a_2(x),\ldots]\in[0,1)$, it is defined by
	\[
	\tau(x)
	:=
	\inf\biggl\{
	s\geq0:
	\sum_{n=1}^{\infty}a^{-s}_n(x)<\infty
	\biggr\}.
	\]
	The convergence exponent describes the distributional growth of the
	partial quotients. From a result of Khintchine \cite{Khinchin24}, $\tau(x)=\infty$ for Lebesgue almost every
	$x\in[0,1)$. Moreover, it was proved by Fang et al. \cite{Fang22} that, for every
	$0\leq\tau<\infty$,
	\[
	\dim_{\rm H}\left\{
	x\in(0,1):\tau(x)=\tau
	\right\}=\frac{1}{2}.
	\]
	Here and below $\dim_{\rm{H}}$ denotes the Hausdorff dimension. For $\tau\in[0,\infty]$ and $v\geq2$, Song, Tan and Zhang \cite{Song} subsequently investigated the joint
	level sets
	\[E_{\tau,v}:=\left\{
	x\in(0,1):
	\tau(x)=\tau,~ v(x)=v
	\right\}.\]
	Their result is stated as follows.
	\begin{thm}[{\cite[Theorem 1.1]{Song}}] \label{Song-thm}
		For every $\tau\in[0,\infty]$ and $v\geq2$, we have
		\[
		\dim_{\rm H}E_{\tau,v}
		=
		\begin{cases}
			\dfrac{2}{v},&\tau=\infty;\\[2mm]
			\dfrac{1}{v},&0\leq\tau<\infty.
		\end{cases}
		\]
	\end{thm}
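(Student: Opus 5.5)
We will prove Theorem~\ref{Song-thm} by treating the cases $\tau=\infty$ and $0\le\tau<\infty$ separately. In each case the upper bound comes from a covering argument and the lower bound from a Cantor subset. Write $\beta=v-2\ge0$, so that by \eqref{irr-limsup} the condition $v(x)=v$ means $\limsup_{n} \log a_{n+1}/\log q_n=\beta$.

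\emph{Case $\tau=\infty$.} Here $E_{\tau,v}\subset F_\beta$, so Bugeaud's result \eqref{dim of Fbeta} gives $\dim_{\rm H}E_{\infty,v}\le \frac{2}{\beta+2}=\frac2v$. For the lower bound we revisit the Cantor set used for $F_\beta$. That set consists of points with $a_{n_k+1}\approx q_{n_k}^\beta$ along a sparse sequence $n_k$, and with the remaining partial quotients ranging in blocks $\{1,\dots,M\}$ with $M$ large. We add the constraint that $a_n\le \log n$ (or some slowly growing bound) on the free positions, while still letting all values $1,\dots,M$ occur with positive frequency. Since any bounded partial quotient occurring infinitely often forces $\sum a_n^{-s}=\infty$ for every $s$, we get $\tau(x)=\infty$ automatically. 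The dimension of the Cantor set therefore still tends to $\frac{2}{\beta+2}$ as $M\to\infty$. If $\beta=0$ we can take the whole set $\{\tau=\infty\}$, which has full measure and $v=2$ almost everywhere, giving dimension $1$.

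\emph{Case $0\le \tau<\infty$.} Now $\sum a_n^{-s}<\infty$ for some finite $s$, so $a_n\to\infty$; in fact $a_n\ge n^{1/s'}$ for all large $n$ outside a set of indices of zero density. The upper bound $\frac1v$ combines this growth with the large quotients $a_{n+1}\ge q_n^{\beta-\epsilon}$ occurring infinitely often. We cover the set by the cylinders $\{x: a_1,\dots,a_n \text{ fixed}, a_{n+1}\ge q_n^{\beta-\epsilon}\}$, which have length about $q_n^{-2-\beta+\epsilon}$, and sum over $(a_1,\dots,a_n)$ with all $a_i\to\infty$. The condition $a_i\to\infty$ is what halves the dimension: it is the mechanism behind Good's theorem that $\{a_n\to\infty\}$ has dimension $\frac12$. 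The relevant sum is $\sum q_n^{-s(2+\beta)}$ over words with large entries, which behaves like $\prod_i\sum_{a\ge N}a^{-s(2+\beta)}$. This converges, and yields a covering with vanishing $s$-measure, when $s(2+\beta)>1$, i.e.\ $s>\frac1v$. The technical point is the index set: $\tau<\infty$ gives $a_n$ large only outside a sparse set of indices. We handle this by splitting the words into those with few small entries, whose count grows subexponentially and contributes no extra dimension, and the rest. I expect this counting estimate to be the main obstacle, since we must make sure that the exceptional small entries do not raise the exponent.

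For the lower bound when $0\le\tau<\infty$, we build a Cantor set in which $a_n\in[n^{1/\tau},2n^{1/\tau}]$ on generic positions, which forces $\tau(x)=\tau$; for $\tau=0$ we use superpolynomial but subexponential growth, e.g.\ $a_n\approx e^{\sqrt n}$. At sparse positions $n_k$ we impose $a_{n_k+1}\approx q_{n_k}^{\beta}$. These sparse insertions do not change $\tau$, and they give $v(x)=v$ because the generic positions contribute $\log a_{n+1}/\log q_n\to0$. Since $\log q_n$ grows superlinearly, the generic blocks contribute dimension $\frac12$, as in Good's construction. A mass distribution argument, with the gap estimate at the levels $n_k$, then yields dimension $\frac{1}{2+\beta}=\frac1v$, by the same computation that turns the $\frac{2}{\beta+2}$ of Jarn\'ik into its halved version for sets with $a_n\to\infty$.
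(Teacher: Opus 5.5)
The paper does not prove this theorem; it quotes it from Song--Tan--Zhang. Your outline is correct, and it can be compared with the machinery the paper uses for the closely related sets $F_{\alpha,\beta}$ and $E_{\le\tau,v}$.

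\textbf{Upper bounds.} These match the paper's arguments. For $\tau=\infty$ you use $E_{\infty,v}\subset F_{v-2}$ together with Bugeaud. For $\tau<\infty$ you cover by the sets $\bigcup_{a\ge q_n^{\beta-\varepsilon}}I_{n+1}(a_1,\dots,a_n,a)$ over words with large entries. This is exactly the upper-bound argument of Section~\ref{section3}; alternatively one can use $E_{\tau,v}\subset J^*_{v-2-\varepsilon}$ and Lemma~\ref{lem-4}, as in the proof of Corollary~\ref{cor-Song}.

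\textbf{Lower bound for $0<\tau<\infty$.} Your Cantor set is essentially the paper's $A_{\alpha,\beta}$ with $\psi(n)=\log(n+1)$ and $\alpha=1/\tau$. The last proposition of the paper shows that this set lies in $\{\tau(x)=\tau,\ v(x)=v\}$. The paper reads off its dimension from Lemma~\ref{3}, namely $1/(2+\limsup_n\log s_{n+1}/\log(s_1\cdots s_n))$, rather than by a hand-made mass distribution with gap estimates.

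\textbf{Lower bound for $\tau=\infty$.} You rebuild the Jarn\'ik--Bugeaud Cantor set with digits in $\{1,\dots,M\}$ off the sparse positions and let $M\to\infty$. Within the paper the natural route is the insertion principle, Proposition~\ref{lem-insertion}. Inserting the digit $1$ along a zero-density set maps $F_{v-2}$ into $E_{\infty,v}$ with no loss of dimension. Infinitely many partial quotients equal to $1$ force $\tau=\infty$, and the irrationality exponent is unchanged, as shown in the treatment of $\alpha=0$. So no new Cantor set and no limit in $M$ are needed. Your route is self-contained and classical; the paper's is modular and transfers $\dim_{\rm H}F_{v-2}$ in one step.

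\textbf{Two details need correcting.}

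First, the obstacle you anticipate in the upper bound does not exist. A fixed threshold suffices. Given $s>1/v$, choose $\varepsilon$ with $r=s(2+\beta-\varepsilon)>1$, and $K$ with $\sum_{a\ge K}a^{-r}\le\frac12$. Since $a_n(x)\to\infty$, each $x\in E_{\tau,v}$ has $a_n(x)\ge K$ for \emph{all} $n>N(x)$, not merely outside a sparse set. After decomposing countably by $N$ and the prefix $(a_1,\dots,a_N)$ (the paper's sets $F(N,\mathbf a)$), every later digit is at least $K$. Each factor in the product bound is then at most $\frac12$, so the $s$-cost decays geometrically in $n$. Your statement that $a_n\ge n^{1/s'}$ outside a zero-density set is true but irrelevant, and no counting of words with exceptional small entries is needed.

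Second, when $\beta=0$ and $\tau<\infty$, the prescription $a_{n_k+1}\approx q_{n_k}^{\beta}=1$ creates infinitely many partial quotients equal to $1$. That forces $\tau(x)=\infty$ and destroys the construction. You must omit the insertions in that case, or in general take the larger of the generic size and $q_{n_k}^\beta$. The paper does the latter with $u_n=\max\{L_n,(u_1\cdots u_{n-1})^{\beta+\theta_n}\}$.

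Also, the extra constraint $a_n\le\log n$ in the case $\tau=\infty$ is vacuous. Infinitely many digits bounded by $M$ already give $\tau=\infty$.
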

	
	A second classical question concerns the growth of $a_n(x)$. The Borel--Bernstein theorem \cite{Bernstein,Borel} states that,  for any positive function $\psi$, the set of $x\in[0,1)$ for which
	\[
	a_n(x)\geq\psi(n)\quad\text{for infinitely many }n\in\N
	\]
	has full or zero Lebesgue measure according as
	$\sum_{n\geq1}\frac{1}{\psi(n)}$ diverges or converges. The Hausdorff dimension of related exceptional sets was studied by Good \cite{Good41}, {\L}uczak \cite{Luczak}, and Wang--Wu \cite{WW08}.
	Further results on the growth of partial quotients can be found in \cite{Hass,Liao16, WX,Xu}.
	As an immediate consequence of the Borel--Bernstein theorem,
	\[
	\limsup_{n\to\infty}\frac{\log a_n(x)}{\log n}=1
	\]
	for Lebesgue almost every $x$. More recently, Fang, Ma and Song \cite{Fang} studied the exceptional sets obtained by
	comparing $\log a_n(x)$ with the function
	$\psi$ satisfying $\psi(n)\to\infty$. They analyzed the Hausdorff dimensions of the corresponding limit, lower--limit and upper--limit level sets, and showed that these three spectra can be genuinely different. Put
	\[B_\psi
	:=\limsup_{n\to\infty}\psi(n)^{1/n}
	=\exp\biggl(\limsup_{n\to\infty}\frac{\log\psi(n)}n\biggr)
	\in[1,\infty].
	\]
	They \cite[Theorem 1.3]{Fang} proved that the Hausdorff dimension of the set
	\[
	G_{1}:=\biggl\{x\in[0,1):
	\liminf_{n\to\infty}\frac{\log a_{n+1}(x)}{\psi(n)}=1
	\biggr\}
	.	\]
	is equal to $\frac1{1+B_\psi}$.
	
	We study the partial quotient \(a_{n+1}(x)\) on two different scales: the prescribed logarithmic scale $\psi(n)$ and the intrinsic scale $\log q_n(x)$, with the latter determining the irrationality exponent.
	We ask when prescribed levels on these two scales can occur simultaneously and how imposing both conditions affects the Hausdorff dimension. For $0\leq\alpha\leq\infty$ and
	$0\leq\beta\leq\infty$, we consider the joint level set
	\begin{equation*}
		F_{\alpha,\beta}
		:=\left\{x\in(0,1):
		\liminf_{n\to\infty}
		\frac{\log a_{n+1}(x)}{\psi(n)}=\alpha,
		~
		\limsup_{n\to\infty}
		\frac{\log a_{n+1}(x)}{\log q_n(x)}=\beta
		\right\}.
	\end{equation*}
	
	Our main result gives a complete answer.
	\begin{thm}\label{thm}
		Let $\psi:\mathbb N\to(0,\infty)$ satisfy $\psi(n)\to\infty$ as $n\to\infty$, and define
		\[
		B_{\psi}
		:=
		\limsup_{n\to\infty}\psi(n)^{1/n}
		=
		\exp\Bigl(
		\limsup_{n\to\infty}
		\frac{\log\psi(n)}{n}
		\Bigr)
		\in[1,\infty].
		\]
		Then
		\[
		\dim_{\rm H}F_{\alpha,\beta}
		=
		\begin{cases}
			\displaystyle \frac{2}{2+\beta},
			&
			\alpha=0,\quad 0\leq\beta<\infty;
			\\[8pt]
			\displaystyle \frac{1}{2+\beta},
			&
			0<\alpha\leq \infty, \quad0\leq\beta<\infty
			\quad\text{and}\quad
			B_\psi\leq1+\beta;
			\\[8pt]
			0,
			&
			0<\alpha\leq \infty,\quad 0\leq\beta<\infty
			\quad\text{and}\quad
			B_\psi>1+\beta;
			\\[6pt]
			0,
			&
			\beta=\infty.
		\end{cases}
		\]
	\end{thm}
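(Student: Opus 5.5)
The plan is to treat the four regimes separately. In each, the upper bound comes from inclusions into known sets or from a covering argument. The lower bounds come from Cantor-type subsets carrying a mass distribution. Throughout I use $q_n(x)\asymp\prod_{i\le n}a_i(x)$ up to a factor $2^n$, so that $\log q_n(x)=\sum_{i\le n}\log a_i(x)+O(n)$.

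\emph{The cases $\beta=\infty$ and $\alpha=0$.} If $\beta=\infty$, then $F_{\alpha,\beta}\subset J_b$ for every $b$, where $J_b$ is Good's set \eqref{JARNIK-1}. Hence $\dim_{\rm H}F_{\alpha,\infty}\le \frac{2}{b+2}\to0$. If $\alpha=0$ and $\beta<\infty$, the upper bound $\frac{2}{2+\beta}$ follows from $F_{0,\beta}\subset F_\beta$ and Bugeaud's result \eqref{dim of Fbeta}. For the lower bound I take the Cantor subset of $F_\beta$ used in Bugeaud's (Jarn\'ik-type) construction, whose dimension is arbitrarily close to $\frac{2}{2+\beta}$. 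I then modify it by forcing $a_{m_k+1}=1$ along a very sparse sequence $m_k$, chosen disjoint from the positions where the large quotients $a_{n+1}\approx q_n^\beta$ are placed. Since $\psi\to\infty$, this gives $\liminf \log a_{n+1}/\psi(n)=0$. The constraint touches a density-zero set of positions, and at each such position only a bounded factor is lost in the mass distribution estimates, so the dimension is unchanged.

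\emph{Upper bounds when $0<\alpha\le\infty$ and $\beta<\infty$.} For $x\in F_{\alpha,\beta}$ we have $\log a_{n+1}\ge c\,\psi(n)$ eventually, for some $c>0$. In particular $a_n(x)\to\infty$, so for each $M$ the set is contained in a countable union, over $N$ and the prefix $a_1,\dots,a_N$, of sets where $a_i\ge M$ for all $i>N$ and $a_{n+1}\ge q_n^{\beta-\varepsilon}$ for infinitely many $n$. I cover the latter by the sets $\{a_{n+1}\ge q_n^{\beta-\varepsilon}\}\cap I(a_1,\dots,a_n)$, each of diameter $\ll q_n^{-(2+\beta-\varepsilon)}$. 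This gives the $s$-cost
\[
\sum_{n}\ \sum_{a_{N+1},\dots,a_n\ge M}\ \prod_{i\le n}a_i^{-(2+\beta-\varepsilon)s}\ \ll\ \sum_n \bigl(C M^{1-(2+\beta-\varepsilon)s}\bigr)^{n-N}.
\]
This is finite, with tails tending to zero, once $(2+\beta-\varepsilon)s>1$ and $M$ is large. Hence $\dim_{\rm H}\le\frac1{2+\beta}$.

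For the zero case, suppose $B_\psi>1+\beta$ and set $S_n=\sum_{i\le n}\log a_i$. The limsup condition gives $\log a_{n+1}\le(\beta+\varepsilon)(S_n+O(n))$ for large $n$, so $S_{n+1}\le(1+\beta+2\varepsilon)S_n+O(n)$. It follows that $S_n\le C(1+\beta+3\varepsilon)^n$. On the other hand, $\log a_{n+1}\ge c\,\psi(n)\ge c\,(B_\psi-\varepsilon)^n$ for infinitely many $n$, which is a contradiction once $\varepsilon$ is small. Thus $F_{\alpha,\beta}=\emptyset$ in this case.

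\emph{Lower bound when $0<\alpha\le\infty$ and $B_\psi\le1+\beta$; this is the main obstacle.} I will build a Cantor set as follows. Fix a target sequence $t_n$ with $t_n/\psi(n)\to\alpha$; when $\alpha=\infty$, take $t_n=\psi(n)\theta_n$ with $\theta_n\to\infty$ slowly. At generic positions I require $a_{n+1}\in[e^{t_n},2e^{t_n}]$. Along a sparse sequence $n_k$ I instead require $a_{n_k+1}\in[q_{n_k}^{\beta},2q_{n_k}^{\beta}]$. The difficulty is compatibility: I need $t_n\le(\beta+o(1))\log q_n$ at every $n$, and the liminf must still equal $\alpha$ even though the big jumps also exceed $e^{t_n}$. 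The hypothesis $B_\psi\le1+\beta$ is exactly what makes this possible. Because the jumps at $n_k$ multiply $\log q_n$ by about $1+\beta$, the accumulated $\log q_n$ grows like $(1+\beta)^{n}$ up to subexponential factors, and it dominates $\psi(n)$ up to a factor $e^{o(n)}$. In the boundary case $B_\psi=1+\beta$, I will replace $t_n$ by $\min\{t_n,(\beta-\delta_n)\log q_n\}$ only at finitely many or sparse positions, chosen so that the liminf is unaffected; if needed, I will take $\beta$-jumps more frequently in order to absorb subexponential excesses. Finally, a mass distribution estimate on this set gives the lower bound $\frac1{2+\beta}$. At generic levels the ratio $\log(\text{number of choices})/\log(1/\text{length})$ tends to $\frac12$. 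The lengths at the jump levels contribute the exponent $\frac1{2+\beta}$, as in Song--Tan--Zhang (Theorem~\ref{Song-thm}). I will verify the Hölder exponent of the measure on balls of all intermediate sizes, which is the technical heart of the argument.
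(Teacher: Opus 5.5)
Your arguments for $\beta=\infty$, for $F_{\alpha,\beta}=\emptyset$ when $B_\psi>1+\beta$, and for the upper bound $\frac1{2+\beta}$ are essentially the paper's. Using a fixed large $M$ in place of $M_n=\lfloor e^{c\psi(n)}\rfloor$ is fine. For $\alpha=0$ you take a different route: you force sparse $1$'s into an explicit Jarník-type Cantor subset of $F_\beta$. The paper instead shows that inserting bounded digits at density-zero positions preserves $\dim_{\rm H}$ of \emph{any} set (Proposition~\ref{lem-insertion}) and applies this to $F_\beta$ itself. Your route is workable in outline.

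\textbf{The gap: the lower bound for $0<\alpha\le\infty$, $B_\psi\le1+\beta$.} Prescribing generic digits with $t_n/\psi(n)\to\alpha$ is in general incompatible with the limsup condition. Since $\psi$ is arbitrary, you need $\alpha\psi(n)\le(\beta+o(1))\log q_n(x)$ at \emph{every} large $n$, that is, with precision $1+o(1)$, not merely up to a factor $e^{o(n)}$.

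Take $\beta=0$, $\alpha=1$, and $\psi(n)=n$ except $\psi(m_k)=m_k^2$ with $m_k=2^{2^k}$. Then $B_\psi=1$, and the theorem gives $\dim_{\rm H}F_{1,0}=\frac12$. But in your set $\log q_{m_k}\approx\sum_{j<m_k}t_j\approx m_k^2/2$ while $\log a_{m_k+1}\approx m_k^2$, so the ratio tends to $2$.

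The jumps do not help:
\begin{itemize}
\item For $\beta=0$ they are vacuous. Worse, $a_{n_k+1}\in[1,2]$ violates the liminf; a jump must be $\max\{e^{t_n},q_n^{\beta+o(1)}\}$.
\item Sparse jumps multiply $\log q_n$ by $1+\beta$ only at sparse times.
\item Even exponent-$\beta$ jumps at every step cannot absorb a subexponential excess such as $\psi(m_k)=(1+\beta)^{m_k}e^{\sqrt{m_k}}$, because then $\log q_n=O((1+\beta)^n)$.
\end{itemize}
Your repair of capping $t_n$ by a multiple of $\log q_n$ also fails. The cap binds exactly at the infinitely many spikes of $\psi$, where $\log a_{n+1}\ge(\alpha-o(1))\psi(n)$ is required. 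In the example the cap forces $\log a_{m_k+1}/\psi(m_k)\to0$.

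\textbf{The missing idea.} You must inflate the digits \emph{before} each spike, so that $\log q_n$ is already large when the spike arrives, while never growing faster than a factor $1+\beta+o(1)$ per step. The paper chooses $\lambda_n\downarrow1+\beta$ with $\psi(n)/(\lambda_1\cdots\lambda_n)\to0$; this is exactly where $B_\psi\le1+\beta$ is used. It then takes the envelope $\ell_n=\alpha\sup_{m\ge n}\psi(m)\prod_{j=n}^{m-1}\lambda_j^{-1}$, which satisfies:
\begin{itemize}
\item $\ell_n\ge\alpha\psi(n)$;
\item $\ell_{n+1}\le\lambda_n\ell_n$, hence $\limsup_n\ell_n/\sum_{j<n}\ell_j\le\beta$;
\item $\ell_n=\alpha\psi(n)$ along an infinite sequence $r_k$.
\end{itemize}
Generic digits follow $e^{\ell_n}$, so $\log a_{n+1}/\psi(n)$ may be unbounded on long stretches before the spikes. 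The liminf $\alpha$ is realized only along the $r_k$. The jumps $u_n=\max\{L_n,(u_1\cdots u_{n-1})^{\beta+\theta_n}\}$ sit on the sparse subsequence $r_{2^k}$, leaving the other $r_k$ free for the liminf. With this sequence fixed, Lemma~\ref{3} gives the lower bound $\frac1{2+\beta}$ directly, so the Hölder estimate you defer is not needed.
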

	The zero--dimensional case in the third line has a stronger conclusion.
	\begin{prop}\label{empty}
		Assume  $0< \alpha\leq \infty$ and $0\leq \beta <\infty$. If $B_{\psi}> 1+\beta$, then
		\[F_{\alpha, \beta}=\emptyset.\]
	\end{prop}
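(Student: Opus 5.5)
Suppose, for contradiction, that $x\in F_{\alpha,\beta}$ with $0<\alpha\le\infty$, $0\le\beta<\infty$ and $B_\psi>1+\beta$. We will show that the two limit conditions force $\psi$ to grow at most like $(1+\beta+\varepsilon)^n$ along every subsequence, which gives $B_\psi\le 1+\beta$.

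\emph{Step 1: a lower bound on $\log a_{n+1}$.} Fix $0<c<\alpha$; if $\alpha=\infty$, any $c>0$ will do. Since $\liminf_{n\to\infty}\log a_{n+1}(x)/\psi(n)=\alpha$, there is $N$ with
\[
\log a_{n+1}(x)\ge c\,\psi(n)\qquad\text{for all } n\ge N.
\]
In particular $a_{n+1}(x)\to\infty$.

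\emph{Step 2: an upper bound from the irrationality exponent.} Fix $\varepsilon>0$. The limsup condition gives $N'$ such that
\[
\log a_{n+1}(x)\le(\beta+\varepsilon)\log q_n(x)\qquad\text{for all } n\ge N'.
\]
We also use the standard bound $q_{n+1}\le (a_{n+1}+1)q_n\le 2a_{n+1}q_n$. Taking logarithms and inserting the previous inequality gives
\[
\log q_{n+1}\le \log 2+\log a_{n+1}+\log q_n\le \log 2+(1+\beta+\varepsilon)\log q_n .
\]
Because $\log q_n\to\infty$, the constant $\log 2$ is absorbed for large $n$, and we get
\[
\log q_{n+1}\le (1+\beta+2\varepsilon)\log q_n\qquad\text{for all large } n.
\]
Iterating this produces a constant $C=C(x,\varepsilon)$ with
\[
\log q_n\le C\,(1+\beta+2\varepsilon)^n\qquad\text{for all } n.
\]

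\emph{Step 3: combining the bounds.} For large $n$, Steps 1 and 2 give
\[
c\,\psi(n)\le\log a_{n+1}\le(\beta+\varepsilon)\log q_n\le(\beta+\varepsilon)\,C\,(1+\beta+2\varepsilon)^n .
\]
The case $\beta=0$ causes no trouble: the middle inequality still holds, with $\varepsilon$ multiplying $\log q_n$. Taking $n$-th roots and then the limsup gives
\[
\limsup_{n\to\infty}\psi(n)^{1/n}\le 1+\beta+2\varepsilon .
\]
Since $\varepsilon>0$ was arbitrary, $B_\psi\le 1+\beta$, which contradicts the hypothesis. Hence $F_{\alpha,\beta}=\emptyset$.

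The argument is short. The only point needing care is Step 2, where the additive $\log 2$ must be absorbed into the multiplicative factor. This works because $\log q_n\ge \frac{n-1}{2}\log 2\to\infty$. No dimension theory is needed.
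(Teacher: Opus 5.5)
Your proof is correct and follows the paper's argument essentially line for line. Both use the lower bound $\log a_{n+1}\ge c\,\psi(n)$ from the liminf condition, and both derive the recursion $\log q_{n+1}\le(1+\beta+2\varepsilon)\log q_n$ from $q_{n+1}\le 2a_{n+1}q_n$ and the limsup condition, absorbing $\log 2$ via $q_n\ge 2^{(n-1)/2}$, before iterating and taking $n$-th roots. The only cosmetic difference is that you treat $\alpha<\infty$ and $\alpha=\infty$ together with a single constant $c$, whereas the paper handles them separately using $\alpha-\varepsilon$ and $M$.
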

	
	Theorem~\ref{thm} reveals two features of the joint spectrum. For finite $\beta$, if $\alpha=0$, the lower--limit condition does not reduce the dimension.
	If $\alpha>0$, the set $F_{\alpha, \beta}$ has positive dimension when $B_\psi\leq 1+\beta$. If $1< B_\psi<\infty$, the critical value
	\[
	\beta_c:=B_\psi-1
	\]
	separates an empty set ($\beta<\beta_c$) from a positive--dimensional set ($\beta_c\leq \beta<\infty$).
	\begin{prop}
		Suppose $0<\alpha<\infty$ and $1\leq B_\psi<\infty$.
		Then for every $\beta_c\leq\beta<\infty$,
		\[
		\dim_{\rm H}F_{\alpha,\beta}
		=
		\frac{1}{2}\dim_{\rm H}F_\beta.
		\]
	\end{prop}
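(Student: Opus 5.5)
This proposition follows directly from Theorem~\ref{thm} together with Bugeaud's formula \eqref{dim of Fbeta}, $\hdim F_\beta=\frac{2}{2+\beta}$. The plan is to check that the hypotheses place $(\alpha,\beta)$ in the second line of Theorem~\ref{thm}, and then compare the two formulas.

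Fix $0<\alpha<\infty$, $1\le B_\psi<\infty$ and $\beta_c\le\beta<\infty$, where $\beta_c=B_\psi-1\ge 0$. Then $\beta\ge B_\psi-1$, that is, $B_\psi\le 1+\beta$. Moreover $\alpha>0$ and $\beta$ is finite. So the second case of Theorem~\ref{thm} applies and gives
\[
\hdim F_{\alpha,\beta}=\frac{1}{2+\beta}.
\]
The condition $\beta\ge 0$ is automatic, because $\beta_c\ge0$. When $B_\psi=1$ we have $\beta_c=0$, so every finite $\beta\ge0$ is covered.

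Comparing with Bugeaud's result,
\[
\frac{1}{2+\beta}=\frac12\cdot\frac{2}{2+\beta}=\frac12\,\hdim F_\beta,
\]
which is the claim. At $\beta=0$ we read $F_0$ as the set where the limsup equals $0$. By \eqref{dim of Fbeta} this set has dimension $1$, since it contains all $x$ with $\log a_{n+1}=o(\log q_n)$, a set of full Lebesgue measure. So the identity still holds there.

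There is no real obstacle at this level. All the difficulty sits in the proof of the second line of Theorem~\ref{thm}, which I would take as given.

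If that line had to be proved, the plan would be as follows.
\begin{itemize}
\item \emph{Upper bound.} Use the natural cover of $F_{\alpha,\beta}$ by fundamental cylinders. The condition $\liminf \log a_{n+1}/\psi(n)=\alpha>0$ forces every partial quotient to be large, at least $e^{(\alpha-\epsilon)\psi(n)}$. The standard Good--{\L}uczak computation for sets where all partial quotients are large then halves the exponent.
\item \emph{Lower bound.} Construct a Cantor subset whose partial quotients are about $e^{\alpha\psi(n)}$ at most positions, with sparse positions where $a_{n+1}\approx q_n^\beta$.
\end{itemize}
In the lower bound, the hypothesis $B_\psi\le1+\beta$ is exactly what keeps the requirement $a_{n+1}\ge e^{\alpha\psi(n)}$ compatible with the upper bound $a_{n+1}\lesssim q_n^{\beta+\epsilon}$.
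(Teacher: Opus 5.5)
Your argument is correct and is exactly how the paper obtains this proposition. The hypotheses give $B_\psi\le 1+\beta$, so the second line of Theorem~\ref{thm} yields $\hdim F_{\alpha,\beta}=\frac{1}{2+\beta}$, which is half of Bugeaud's value $\hdim F_\beta=\frac{2}{2+\beta}$; your remark on $\beta=0$ is also fine.

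One caveat concerns only your optional lower-bound sketch: taking $a_{n+1}\approx e^{\alpha\psi(n)}$ at most positions can force $\limsup_{n\to\infty}\log a_{n+1}/\log q_n=\infty$ when $\psi$ has sudden large jumps. This is why the paper instead uses the smoothed envelope $\ell_n=\alpha\sup_{m\ge n}\psi(m)\prod_{j=n}^{m-1}\lambda_j^{-1}$ with $\lambda_n\to 1+\beta$, which in general matches $\alpha\psi(n)$ only along a subsequence.
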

	\begin{rem}
		Suppose $0<\alpha<\infty$ and $1\leq B_\psi<\infty$. The joint dimension equals the smaller of these two dimensions at $\beta=\beta_c$, but is strictly smaller than $G_{\alpha}$ and $F_\beta$ when $\beta_c< \beta<\infty$.
	\end{rem}
	
	As a consequence of Theorem \ref{thm}, together with Theorem \ref{Song-thm},  we obtain the following sub--level set analogue: replacing the exact condition \(\tau(x)=\tau\) by \(\tau(x)\leq\tau\) does not change the Hausdorff dimension. Define
	\[
	E_{\le\tau,v}:=\left\{
	x\in(0,1):
	\tau(x)\leq\tau,\ v(x)=v
	\right\}.
	\]
	Then, we have
	\begin{cor}\label{cor-Song}
		For every $\tau\in[0,\infty]$ and $v\geq2$, we have
		\[
		\dim_{\rm H}E_{\le\tau,v}
		=\dim_{\rm H}E_{\tau,v}=
		\begin{cases}
			\dfrac{2}{v},&\tau=\infty;\\[2mm]
			\dfrac{1}{v},&0\leq\tau<\infty.
		\end{cases}
		\]
	\end{cor}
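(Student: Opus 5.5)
We need to prove Corollary \ref{cor-Song}: $\dim_{\rm H}E_{\le\tau,v}=\dim_{\rm H}E_{\tau,v}$, with the stated values. Write $\beta=v-2\in[0,\infty)$ (for finite $v$), so that by \eqref{irr-limsup} the condition $v(x)=v$ is the same as $\limsup_n \log a_{n+1}(x)/\log q_n(x)=\beta$. Since $E_{\tau,v}\subset E_{\le\tau,v}$, Theorem \ref{Song-thm} gives the lower bound
\[
\dim_{\rm H}E_{\le\tau,v}\ \ge\ \dim_{\rm H}E_{\tau,v}.
\]
Everything therefore reduces to proving a matching upper bound.

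\textbf{Case $\tau=\infty$.} Here $E_{\le\infty,v}=\{x: v(x)=v\}=F_\beta$. By Bugeaud's result \eqref{dim of Fbeta}, this set has dimension $\frac{2}{2+\beta}=\frac2v$, which settles the case.

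\textbf{Case $0\le\tau<\infty$.} The key observation is that a finite convergence exponent forces $\liminf$ growth on a logarithmic scale. Fix $s>\tau$, for instance $s=\tau+1$. For $x\in E_{\le\tau,v}$ we have $\sum_n a_n(x)^{-s}<\infty$, hence $a_n(x)^{-s}\to0$, so $\log a_n(x)\to\infty$. This alone is too weak: we need a concrete $\psi$ with $\psi(n)\to\infty$, $B_\psi=1$, and $\liminf \log a_{n+1}/\psi(n)>0$, and such a $\psi$ cannot be fixed uniformly for all $x$. Instead we use convergence of the series more carefully.

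Since $\sum_n a_n^{-s}<\infty$, for every $M$ only finitely many $n$ satisfy $a_n\le M$; equivalently, $\#\{n:a_n\le M\}<\infty$. This does not give a uniform rate either, so the plan is to decompose $E_{\le\tau,v}$ into countably many sets on which a uniform rate holds. Choose a countable family of functions $\psi_k$ with $\psi_k\to\infty$ and $B_{\psi_k}=1$ such that every sequence with $\log a_n\to\infty$ satisfies $\liminf\log a_{n+1}/\psi_k(n)\in(0,\infty]$ for some $k$. This is exactly the step I expect to be the main obstacle: a sequence tending to infinity can do so arbitrarily slowly, and no countable family of $\psi_k$ dominates all slowly divergent sequences from below.

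A better route is to bypass Theorem \ref{thm} and bound the dimension directly by a covering argument that uses only the two facts $a_n\to\infty$ and $v(x)=v$. For $x\in E_{\le\tau,v}$ and any $N$, there is $n_0$ with $a_n\ge N$ for all $n\ge n_0$. Fix $n_0$ and the first $n_0$ digits; this is a countable union, so it suffices to bound each piece. Then:
\begin{enumerate}
\item Cover by cylinders in which all later digits are at least $N$, and at infinitely many levels $n$ the digit satisfies $a_{n+1}\ge q_n^{\beta-\epsilon}$.
\item Use the standard estimate from Good's proof. The set where all digits are at least $N$ has dimension tending to $\frac12$ as $N\to\infty$, which is the Good/\L uczak phenomenon.
\item Combine this with the Jarník-type covering by the intervals $\{a_{n+1}\ge q_n^{\beta-\epsilon}\}$, which have length about $q_n^{-(2+\beta-\epsilon)}$.
\end{enumerate}
Counting the cylinders of order $n$ with digits at least $N$, weighted by $q_n^{-(2+\beta-\epsilon)s}$, we get
\[
\sum_{a_1,\dots,a_n\ge N} q_n^{-(2+\beta)s}\ \lesssim\ \Bigl(\sum_{a\ge N}a^{-(2+\beta)s}\Bigr)^n ,
\]
which is summable over $n$ once $s>\frac{1}{2+\beta}$ and $N$ is large. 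This gives the upper bound $\frac{1}{2+\beta}+\epsilon=\frac1v+\epsilon$. Letting $\epsilon\to0$ yields $\dim_{\rm H}E_{\le\tau,v}\le\frac1v$, matching the lower bound from Theorem \ref{Song-thm}.

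The case $v=\infty$ is covered by the same bound, since all the sets involved have dimension $0$.
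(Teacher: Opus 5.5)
Your proof is correct, but it is routed differently from the paper's.

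\emph{Lower bound.} You use the trivial inclusion $E_{\tau,v}\subset E_{\le\tau,v}$ together with Theorem~\ref{Song-thm}. The paper instead takes $\psi(n)=\log(n+1)$ (so $B_\psi=1\le 1+\beta$), $\alpha=1/\tau$ (with $1/0=\infty$) and $\beta=v-2$. It observes that $\liminf_n \log a_{n+1}(x)/\log(n+1)=1/\tau$ forces $\tau(x)\le\tau$, and then obtains $\dim_{\rm H}E_{\le\tau,v}\ge \dim_{\rm H}F_{1/\tau,v-2}=1/v$ from Theorem~\ref{thm}. Your version is a one-liner, but it leans on Song--Tan--Zhang's Cantor construction. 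The paper's version is what makes the statement a genuine corollary of Theorem~\ref{thm}.

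\emph{Upper bound.} The paper only notes that $E_{\le\tau,v}\subset J^*_{v-2-\varepsilon}$ (since $a_n\to\infty$ gives $\log q_n/n\to\infty$), quotes Lemma~\ref{lem-4}, and appeals separately to Good's theorem when $v=2$. You carry out the covering yourself. It is precisely the upper-bound argument of Section~\ref{section3}, with the growing threshold $M_n$ replaced by a constant $N$ chosen after $s$ and $\epsilon$. This is legitimate because only $a_n\to\infty$ is used, and the countable decomposition over $(n_0,a_1,\ldots,a_{n_0})$ may be made after $N$ is fixed. The result is self-contained and handles $v=2$ uniformly, since the condition $a_{n+1}\ge q_n^{\beta-\epsilon}$ is then vacuous.

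\emph{Write-up.} Drop the exploratory paragraph on a countable family $\psi_k$. It is genuinely a dead end: finite $\tau(x)$ is compatible with $\liminf_n\log a_{n+1}/\psi(n)=0$ for any prescribed $\psi$, for instance by placing $a_{n_k}=k$ at sufficiently sparse positions. In the displayed estimate:
\begin{itemize}
\item use the exponent $r=(2+\beta-\epsilon)s$;
\item keep the factor $q_{n_0}^{-r}$ coming from the fixed prefix;
\item choose $N$ so that $\sum_{a\ge N}a^{-r}<1$.
\end{itemize}
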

	We also state a sparse--insertion principle for
	continued fractions in this section. It will be proved and applied in the treatment
	of the  case \(\alpha=0\) in
	Theorem \ref{thm}.
	\begin{prop}
		\label{lem-insertion}
		Let $\mathcal N=\{N_1<N_2<\cdots\}\subset\mathbb N$ satisfy
		\[
		r(n):=\#\bigl(\mathcal N\cap[1,n]\bigr)=o(n).
		\]
		Let $\{c_k\}_{k\geq1}$ be a bounded sequence of positive
		integers. For
		$x=[a_1,a_2,\ldots]\in[0,1)\setminus\mathbb Q$, define
		$\iota(x)=[b_1,b_2,\ldots]$
		by inserting $c_k$ at position $N_k$ and keeping all the
		original digits in their original order. Then, for every
		$E\subset[0,1)\setminus\mathbb Q$,
		\[
		\dim_{\mathrm H}\iota(E)
		=
		\dim_{\mathrm H}E.
		\]
	\end{prop}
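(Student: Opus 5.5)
The plan is to show that $\iota$ and its inverse on the image are, at every scale, controlled by a distortion that is subexponential in the depth. A cylinder-mapping argument then gives both inequalities. Note first that $\iota$ is injective and $\iota^{-1}$ (deleting the digits at positions $N_k$) is well defined on $\iota([0,1)\setminus\mathbb Q)$. By symmetry it suffices to prove $\dim_{\rm H}\iota(E)\le\dim_{\rm H}E$ and $\dim_{\rm H}E\le\dim_{\rm H}\iota(E)$. Each will follow from a covering argument that uses only the basic cylinders $I_n(a_1,\dots,a_n)$, whose lengths satisfy
\[
\frac{1}{2q_n^2}\le |I_n(a_1,\ldots,a_n)|\le \frac{1}{q_n^2}.
\]
Here $q_n=q_n(a_1,\dots,a_n)$.

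\textbf{Key step (distortion).} Let $M$ bound the $c_k$. For $x$ with prefix $(a_1,\dots,a_n)$, the image $\iota(x)$ has prefix $(b_1,\dots,b_{n+r'})$, where $r'=r'(n)\le r(n+r')$ is the number of inserted digits. Inserting one digit $c\le M$ into a finite word multiplies $q$ by a factor in $[1,\,(M+1)\cdot 2]$. Precisely, I will use the standard inequalities
\[
q_{m}(w_1)q_{l}(w_2)\le q_{m+l}(w_1w_2)\le 2q_m(w_1)q_l(w_2)
\]
together with $q_1(c)=c\le M$. Consequently,
\[
q_n(a_1,\ldots,a_n)\le q_{n+r'}(b_1,\ldots,b_{n+r'})\le (2M+2)^{2r'}q_n(a_1,\ldots,a_n).
\]
Since $r'=o(n)$ and $q_n\ge 2^{(n-1)/2}$ grows at least exponentially, we get $q_{n+r'}\le q_n^{1+o(1)}$ uniformly in the word. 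Hence $\iota(I_n(a))\subset I_{n+r'}(b)$ and $|I_{n+r'}(b)|\ge |I_n(a)|^{1+\epsilon_n}$ with $\epsilon_n\to0$. The reverse comparison also holds, namely $|I_{n+r'}(b)|\le|I_n(a)|$.

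\textbf{Covering arguments.} A general ball cannot simply be covered by cylinders, because partial quotients are unbounded. I would therefore first reduce to the standard fact that Hausdorff dimension on $[0,1)\setminus\mathbb Q$ can be computed using covers by \emph{fundamental intervals} $J_n(a)=\bigcup_{k\ge1}\mathrm{cl}\,I_{n+1}(a,k)$ or by cylinders. The precise statement needed is that any interval $U$ meets at most three cylinders $I_n$ of comparable length for a suitable $n$. If some partial quotient blows up, $U$ is instead covered by a bounded number of sets of the form $\bigcup_{k\ge K}I_{n+1}(a,k)$, whose diameter is comparable to $|I_n(a)|/K$. This is the usual reduction, as in Wang--Wu.

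For $\dim\iota(E)\le\dim E$: take $s>\dim_{\rm H}E$ and a cover of $E$ by such sets with $\sum|U_i|^{s}$ small. Map each cylinder-type set through $\iota$. A tail set $\bigcup_{k\ge K}I_{n+1}(a,k)$ maps into the analogous tail set at depth $n+1+r'$, possibly after also fixing inserted digits. Its length is at most $(2M+2)^{O(r')}$ times the original length, which is $\le|U_i|^{1-o(1)}$ as the depth tends to infinity. So $\sum|\iota U_i|^{s+\delta}$ is small for every $\delta>0$. For the reverse inequality, apply the same argument to $\iota^{-1}$ on $\iota(E)$, covering $\iota(E)$ by cylinders in the $b$-coordinates. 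Deleting the fixed inserted digits enlarges lengths by at most $(2M+2)^{O(r)}$. One subtlety is that a covering cylinder may cut in the middle of an inserted position. It can then be refined by fixing the at most $M$ admissible values at that position, or enlarged by dropping the last digit, at a bounded cost.

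\textbf{Main obstacle.} I expect the hardest part to be making the distortion uniform across covers whose elements live at different depths, including small depths. At small depth the factor $(2M+2)^{2r'}$ is not $|U|^{-o(1)}$. I would handle this by restricting to covers of diameter $<\delta$: small diameter forces large depth $n$, since $|I_n|\ge 1/(2q_n^2)$ while $|U|<\delta$ together with membership of a bounded-$q$ region is impossible for small $\delta$. The tail-set case needs care, since a tail set of large $K$ at small depth $n$ has small diameter. Its distortion is governed by $r(n)$ at that small depth, a bounded factor, which is harmless because $|U|^{-\delta'}\to\infty$. This should close the argument.
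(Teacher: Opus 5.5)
Your distortion estimates are correct and match the paper's. Inserting digits bounded by $M$ multiplies $q$ by at most $(M+1)^{r}$, so insertion shrinks the corresponding covering sets up to a factor $4$. Deletion enlarges them by at most $4(M+1)^{2r(n)}\le C_\eta|\widetilde V|^{-\eta}$, because $|\widetilde V|\le 2^{-(n-2)}$ and $r(n)=o(n)$. Your bounded-constant treatment of small depths is the paper's $C_\eta$.

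\textbf{The gap is the covering reduction.} Your ``fundamental intervals'' $\bigcup_{k\ge1}\overline{I_{n+1}(a,k)}$ are just closed cylinders of order $n$. So your family consists of cylinders and tails $\bigcup_{k\ge K}I_{n+1}(a,k)$, and the structural claim you rely on is false. Take $U=\bigcup_{k=K}^{K+m}I_{n+1}(a_1,\ldots,a_n,k)$ with $1\ll m\ll K$. Then $|U|\asymp m/(K^2q_n^2)$, and:
\begin{itemize}
\item $U$ meets $m+1$ mutually comparable cylinders of order $n+1$;
\item every cylinder of lower order containing $U$ has length $\gtrsim q_n^{-2}$;
\item the tail $\bigcup_{k\ge K}I_{n+1}(a_1,\ldots,a_n,k)$ has length $\asymp 1/(Kq_n^2)$, i.e.\ about $K/m$ times $|U|$.
\end{itemize}

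This is not a technicality: cylinders and tails do not form a faithful family. Let $E$ be the set of $x$ with $s_n\le a_n(x)<s_n+s_n^{\gamma}$ for all $n$, where $s_n=\lfloor e^{b^n}\rfloor$, $b>1$, $0<\gamma<1$. Cover $E$ at level $n$ by the blocks of admissible cylinders of order $n+1$ inside each admissible cylinder of order $n$. This gives $\dim_{\rm H}E\le\gamma/(2b-\gamma(b-1))$. On the other hand, the uniform measure $\mu$ on $E$ satisfies $\mu(F)\lesssim|F|^{t}$ for every cylinder and every tail $F$ once $t<\gamma/(b+1)$. So covers from your family only detect the strictly larger exponent $\gamma/(b+1)$. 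Both of your inequalities begin by replacing an arbitrary cover with one from this family, so neither is justified as written.

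\textbf{The fix is the ingredient the paper uses.} Use the family of blocks of consecutive cylinders
\[
J_n(a_1,\ldots,a_{n-1};s,t)=\bigcup_{d=s}^{t}I_n(a_1,\ldots,a_{n-1},d),
\]
with arbitrary $1\le s\le t$, allowing $t=\infty$. This family is faithful; the paper cites Liu, but it is also elementary.
\begin{itemize}
\item Take the largest $n$ with $U$ inside one cylinder of order $n$.
\item If $U$ meets at least three subcylinders, it contains the inner ones. Adjacent subcylinders have comparable lengths, so the block they span has length $\le C|U|$.
\item If $U$ meets exactly two, split $U$ at their common endpoint and repeat on each half. Each half now has an endpoint of its cylinder as an endpoint. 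It therefore either contains the subcylinder with digit $1$, or all subcylinders beyond some index. Either way one block of length $\le C|U|$ covers it.
\end{itemize}
Hence every interval is covered by at most two blocks of diameter $\le C|U|$.

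Insertion and deletion map such blocks to blocks with the same $s,t$. When the block's level is an inserted position, refine to the single admissible digit $c_k$ there. The explicit length
\[
\frac{t-s+1}{(sq_{n-1}+q_{n-2})\bigl((t+1)q_{n-1}+q_{n-2}\bigr)},
\]
combined with $q\le\widetilde q\le (M+1)^{r}q$, yields exactly the comparisons $|\widetilde U|\le 4|U|$ and $|V|\le 4(M+1)^{2r(n)}|\widetilde V|$. With this family in place of cylinders and tails, your argument goes through and coincides with the paper's.
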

	This paper is organized as follows. In Section \ref{section2}, we introduce the notation and preliminaries used throughout the paper. In Section \ref{section3},  for the case $0<\alpha\leq\infty
	~\text{and}~
	0\leq\beta<\infty $, we establish the upper bound in  Theorem \ref{thm} for the case $0<\alpha\leq \infty$, $0\leq\beta<\infty$, and construct Cantor subsets of \(F_{\alpha,\beta}\) to obtain the  lower bound.
	In Section \ref{The remaining cases}, we establish the  remaining cases of  Theorem \ref{thm} and we prove that
	inserting uniformly bounded prescribed partial quotients at positions
	of zero asymptotic density preserves the Hausdorff dimension. In the last Section, we establish connections between our results and the convergence and irrationality exponents.

	\section{Preliminaries}\label{section2}
	The  continued fraction expansion is generated by the Gauss
	transformation
	\[
	T:[0,1)\longrightarrow[0,1),
	\qquad
	T(0):=0,
	\qquad
	T(x):=\frac1x-\left\lfloor\frac1x\right\rfloor
	\quad\text{for }x\in(0,1).
	\]
	Every irrational number $x\in(0,1)$ admits a unique infinite regular
	continued fraction expansion
	\begin{align}\label{CFE}
		x
		&=
		\cfrac{1}{
			a_1(x)+
			\cfrac{1}{
				a_2(x)+
				\cfrac{1}{
					\ddots+
					\cfrac{1}{
						a_n(x)+\ddots
					}
				}
			}
		}.
	\end{align}
	Here
	\(
	a_n(x)
	=
	\left\lfloor
	\frac{1}{T^{n-1}(x)}
	\right\rfloor,
	~ n\geq1,
	\)
	where $\lfloor z\rfloor$ denotes the greatest integer not exceeding
	$z$. The integer $a_n(x)$ is called the \emph{$n$-th partial quotient}
	of $x$. For brevity, we write \eqref{CFE} as
	\(
	x=[a_1(x),a_2(x),\ldots].
	\)
	For each $n\geq1$, the finite truncation
	\[
	\frac{p_n(x)}{q_n(x)}
	:=
	[a_1(x),a_2(x),\ldots,a_n(x)]
	\]
	is called the \emph{$n$-th convergent} of $x$. As usual, we adopt the
	initial conditions
	\[
	p_{-1}(x)=1,\quad p_0(x)=0,
	\quad
	q_{-1}(x)=0,\quad q_0(x)=1.
	\]
	Then the numerators $\{p_n(x)\}_{n\geq1}$ and denominators
	$\{q_n(x)\}_{n\geq1}$ satisfy the recurrence relations
	\cite{Khi64}
	\begin{equation}\label{sequence-qn}
		\begin{cases}
			p_n(x)=a_n(x)p_{n-1}(x)+p_{n-2}(x),\\[2mm]
			q_n(x)=a_n(x)q_{n-1}(x)+q_{n-2}(x),
		\end{cases}
		\qquad n\geq1.
	\end{equation}
	
	For any $n\geq1$ and any
	$(a_1,\ldots,a_n)\in\mathbb N^n$, the \emph{cylinder of order $n$}
	is defined by
	\[
	I_n(a_1,\ldots,a_n)
	:=
	\left\{
	x\in[0,1):
	a_1(x)=a_1,\ldots,a_n(x)=a_n
	\right\}.
	\]
	Since $p_n(x)$ and $q_n(x)$ depend only on the first $n$ partial
	quotients of $x$, they are constant on each cylinder
	$I_n(a_1,\ldots,a_n)$. Thus, for simplicity, we
	write $a_n$, $p_n$, and $q_n$ in place of
	$a_n(x)$, $p_n(x)$, and $q_n(x)$, respectively.
	
	We shall repeatedly use the following  properties of
	continued fraction cylinders.
	
	\begin{prop}[\cite{Khi64}]\label{length of cylinder}
		Let $n\geq1$ and $(a_1,\ldots,a_n)\in\mathbb N^n$, and let
		$p_n/q_n=[a_1,\ldots,a_n]$, where $p_n$ and $q_n$ are defined by
		\eqref{sequence-qn}. Then the following statements hold.
		
		\begin{enumerate}
			\item
			The cylinder $I_n(a_1,\ldots,a_n)$  takes the form
			\[
			I_n(a_1,\ldots,a_n)
			=
			\begin{cases}
				\displaystyle
				\left[
				\frac{p_n}{q_n},
				\frac{p_n+p_{n-1}}{q_n+q_{n-1}}
				\right),
				& n \text{ is even};\\[4mm]
				\displaystyle
				\left(
				\frac{p_n+p_{n-1}}{q_n+q_{n-1}},
				\frac{p_n}{q_n}
				\right],
				& n \text{ is odd}.
			\end{cases}
			\]
			Its length satisfies
			\[
			\frac{1}{2q_n^2}
			\leq|I_n(a_1,\ldots,a_n)|
			=
			\frac{1}{q_n(q_n+q_{n-1})}\leq
			\frac{1}{q_n^2}.
			\]
			
			\item
			The denominator $q_n$ satisfies $q_n\geq 2^{(n-1)/2}$ and
			$\prod_{k=1}^n a_k \leq q_n \leq 2^n\prod_{k=1}^n a_k$.
			
			\item
			It holds that
			\[\frac{a_k+1}{2}\leq\frac{q_n(a_1,\dots,a_n)}{q_{n-1}(a_1,\dots,a_{k-1},a_{k+1},\dots,a_n)}\leq a_{k}+1.
			\]
		\end{enumerate}
	\end{prop}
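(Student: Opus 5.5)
We would prove the three statements directly from the recurrence \eqref{sequence-qn}. For (1) and (2) we use the standard identity expressing $x$ through $T^n x$. For (3) we use continuants.

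\emph{Part (1).} By induction on $n$ (using $x=1/(a_1+Tx)$ and \eqref{sequence-qn}), every $x\in I_n(a_1,\ldots,a_n)$ satisfies
\[
x=\frac{p_n+t\,p_{n-1}}{q_n+t\,q_{n-1}},\qquad t=T^nx\in[0,1).
\]
Conversely, every irrational $t\in[0,1)$ gives a point of the cylinder. The map $t\mapsto (p_n+tp_{n-1})/(q_n+tq_{n-1})$ is a M\"obius map. Its determinant is $p_{n-1}q_n-p_nq_{n-1}=(-1)^n$, which follows by induction from \eqref{sequence-qn}. So the map is monotone, increasing for $n$ even and decreasing for $n$ odd. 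The endpoints $t=0$ and $t\to1$ then give the stated half-open intervals. The length is
\[
\Bigl|\frac{p_n}{q_n}-\frac{p_n+p_{n-1}}{q_n+q_{n-1}}\Bigr|=\frac{|p_{n-1}q_n-p_nq_{n-1}|}{q_n(q_n+q_{n-1})}=\frac{1}{q_n(q_n+q_{n-1})}.
\]
The two bounds follow from $0\le q_{n-1}\le q_n$.

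\emph{Part (2).} From \eqref{sequence-qn} we get $a_nq_{n-1}\le q_n\le (a_n+1)q_{n-1}\le 2a_nq_{n-1}$. Iterating from $q_0=1$ gives $\prod a_k\le q_n\le 2^n\prod a_k$. In addition, $q_n\ge q_{n-1}+q_{n-2}\ge 2q_{n-2}$. Together with $q_0=q_1/a_1$ and $q_1\ge1$, induction gives $q_n\ge 2^{(n-1)/2}$.

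\emph{Part (3).} This is the only step needing an idea. Write $q_n=K(a_1,\ldots,a_n)$ as a continuant, with $K(\emptyset)=1$ and $K$ of a word of length $-1$ equal to $0$. Put
\[
A=K(a_1,\ldots,a_{k-1}),\quad A'=K(a_1,\ldots,a_{k-2}),\quad B=K(a_{k+1},\ldots,a_n),\quad B'=K(a_{k+2},\ldots,a_n).
\]
Then $0\le A'\le A$ and $0\le B'\le B$. The splitting formula for continuants (equivalently, the product of the $2\times2$ matrices $\begin{pmatrix}a_j&1\\1&0\end{pmatrix}$) gives
\[
q_n=a_kAB+AB'+A'B,\qquad q_{n-1}(a_1,\ldots,a_{k-1},a_{k+1},\ldots,a_n)=AB+A'B'.
\]
For the upper bound we need $AB'+A'B\le AB+(a_k+1)A'B'$. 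This holds because
\[
AB+A'B'-AB'-A'B=(A-A')(B-B')\ge0.
\]
For the lower bound, compute
\[
q_n-\tfrac{a_k+1}{2}(AB+A'B')=\tfrac{a_k-1}{2}(AB-A'B')+AB'+(A'B-A'B').
\]
Each of the three terms on the right is nonnegative, since $a_k\ge1$, $A'B'\le AB$ and $B'\le B$. The boundary cases $k=1$ and $k=n$ are covered by the conventions on $K$.

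The main obstacle is establishing the splitting identity for continuants. It follows from associativity of the matrix product. We would multiply out the middle factor $\begin{pmatrix}a_k&1\\1&0\end{pmatrix}$, and compare the result with the product obtained when that factor is omitted.
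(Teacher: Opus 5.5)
Your proof is correct and is the standard argument. The paper gives no proof of its own and simply cites Khintchine, where (1) and (2) are obtained as you do: from $x=(p_n+tp_{n-1})/(q_n+tq_{n-1})$ with $t=T^nx$ and from $p_{n-1}q_n-p_nq_{n-1}=(-1)^n$. Your continuant splitting, $q_n=a_kAB+AB'+A'B$ and $q_{n-1}(a_1,\ldots,a_{k-1},a_{k+1},\ldots,a_n)=AB+A'B'$, is the usual route to (3).

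One caveat comes from the statement rather than your reasoning. The closed endpoint $p_n/q_n$ lies in $I_n(a_1,\ldots,a_n)$ only when $a_n\geq2$. If $a_n=1$ and $n\geq2$, then $[a_1,\ldots,a_{n-1},1]=[a_1,\ldots,a_{n-1}+1]$ has $(n-1)$-th Gauss digit $a_{n-1}+1$; if $n=1$, then $p_1/q_1=1\notin[0,1)$. So the set identity in (1) holds up to that single point, which is harmless for the length formula and everything built on it.
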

	For further basic properties of continued fractions, we refer
	the reader to \cite{A14,Khi64,RS92}. The following lemma is useful for
	establishing lower bounds for the Hausdorff dimensions of certain sets
	defined in terms of continued fraction expansions.
	\begin{lem}[{\cite[Lemma 3.2]{ALBW13}}]\label{3}
		Let $\{s_n\}_{n\geq1}$ be a sequence of positive integers tending to infinity. Then, for every $N\geq2$,
		\begin{equation*}
			\begin{aligned}
				&\hdim \big\{x\in[0,1): s_n\leq a_n(x)< Ns_n,\  \text{for all}\ n \geq 1\big\}\\
				&=\liminf_{n\to\infty}\frac{\log (s_1s_2\cdots s_n)}{2\log (s_1s_2\cdots s_n)+\log s_{n+1}}\\
				&=\frac{1}{2+\limsup\limits_{n\to\infty}\frac{\log s_{n+1}}{\log (s_1s_2\cdots s_n)}}.
			\end{aligned}
		\end{equation*}
	\end{lem}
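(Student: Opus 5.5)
The plan is the usual two-part argument for Cantor-type sets of continued fractions: a covering argument for the upper bound and the mass distribution principle for the lower bound. Write
\[
E:=\{x\in[0,1): s_n\le a_n(x)<Ns_n \text{ for all } n\ge1\},\qquad D_n:=\{(a_1,\dots,a_n): s_k\le a_k<Ns_k,\ 1\le k\le n\}.
\]
For $(a_1,\dots,a_n)\in D_n$ I define the \emph{fundamental interval}
\[
J_n(a_1,\dots,a_n):=\bigcup_{s_{n+1}\le a<Ns_{n+1}} \overline{I_{n+1}(a_1,\dots,a_n,a)}.
\]
Then $E=\bigcap_n\bigcup_{D_n}J_n$. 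By Proposition~\ref{length of cylinder}(2), every word in $D_n$ satisfies
\[
\prod_{k\le n}s_k\le q_n\le (2N)^n\prod_{k\le n}s_k .
\]
Since $s_n\to\infty$, we have $n=o(\log(s_1\cdots s_n))$, so all factors of the form $C^n$ are negligible on the logarithmic scale. This observation is what makes every estimate below sharp.

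\emph{Geometry.} The cylinders $I_{n+1}(a_1,\dots,a_n,a)$ with $a\ge s_{n+1}$ fill the part of $I_n$ next to the endpoint $p_n/q_n$. Using the length formula in Proposition~\ref{length of cylinder}(1), I get
\[
|J_n|\asymp \frac{1}{s_{n+1}q_n^2}
\]
with constants depending only on $N$. The cylinders with $a<s_{n+1}$ occupy at least a fixed proportion of $I_n$. Hence distinct fundamental intervals of order $n$ are separated by gaps $G_n\gtrsim |I_n|\asymp q_n^{-2}$. Moreover, inside one $J_n$ all order-$(n+1)$ cylinders have comparable length, namely $\asymp (s_{n+1}q_n)^{-2}$.

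\emph{Upper bound.} For each $n$, cover $E$ by the $\#D_n\le \prod_{k\le n}(N-1)s_k$ intervals $J_n$. This gives
\[
\sum_{D_n}|J_n|^t\lesssim (N-1)^n(s_1\cdots s_n)\bigl((s_1\cdots s_n)^2s_{n+1}\bigr)^{-t}.
\]
Fix any $t$ larger than the liminf of $\frac{\log(s_1\cdots s_n)}{2\log(s_1\cdots s_n)+\log s_{n+1}}$. Along the subsequence realizing that liminf, the sum tends to $0$, because the factor $(N-1)^n$ is negligible. Hence $\dim_{\rm H}E$ is at most the liminf.

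\emph{Lower bound.} Define $\mu$ by splitting the mass of each $J_n$ equally among its $\asymp (N-1)s_{n+1}$ children. Then
\[
\mu(J_n)=\mu(I_n)\approx\prod_{k\le n}\bigl((N-1)s_k\bigr)^{-1}.
\]
Let $s_0$ denote the liminf and fix $s<s_0$. For $x\in E$ and small $r$, choose $n$ with $G_{n+1}\le r<G_n$. The ball $B(x,r)$ then meets only the single $J_n$ containing $x$, so $\mu(B)\le\mu(J_n)$. The ball also meets at most $\lesssim r(s_{n+1}q_n)^2+2$ order-$(n+1)$ cylinders, each of mass $\mu(I_{n+1})$. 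Therefore
\[
\mu(B)\lesssim \min\Bigl\{\mu(J_n),\ r\,(s_{n+1}q_n)^2\mu(I_{n+1})\Bigr\}\le \mu(J_n)^{1-s}\bigl(r(s_{n+1}q_n)^2\mu(I_{n+1})\bigr)^{s}.
\]
The right-hand side is $\lesssim r^{s}\,(s_1\cdots s_n)^{-(1-s)}(s_1\cdots s_n)^{2s}s_{n+1}^{s}$, up to $e^{o(\log(s_1\cdots s_n))}$ corrections. By the choice $s<s_0$, this is at most $r^{s-\epsilon}$ once $n$ is large. The $+2$ term is handled because $r\ge G_{n+1}\gtrsim|I_{n+1}|$, so it is absorbed into the first term up to a constant. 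The mass distribution principle then gives $\dim_{\rm H}E\ge s$.

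\emph{Second equality.} Divide the numerator and denominator by $\log(s_1\cdots s_n)$. This turns the liminf into $\bigl(2+\limsup_n \frac{\log s_{n+1}}{\log(s_1\cdots s_n)}\bigr)^{-1}$.

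I expect the main obstacle to be the ball estimate in the lower bound. It requires a careful verification that the gap $G_n$ is truly comparable to $|I_n|$, using the ordering of the sub-cylinders of $I_n$, and that the sub-exponential losses $C^n$ are uniformly dominated by $(s_1\cdots s_n)^{\epsilon}$.
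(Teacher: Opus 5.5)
Your argument is the standard proof of this lemma, as given in the source the paper cites; the paper itself only quotes it. The upper bound covers by the fundamental intervals $J_n$ along a subsequence realizing the $\liminf$, and the lower bound uses the uniform mass distribution with the gap estimate $G_n\gtrsim q_n^{-2}$ and ball-counting at level $n+1$. One arithmetic slip needs fixing: in the final ball estimate you dropped the factor $(s_1\cdots s_n)^{-s}$ coming from $\mu(I_{n+1})^s$, so the correct bound is $r^{s}(s_1\cdots s_n)^{2s-1}s_{n+1}^{s}$ up to $C^n$ factors, not $(s_1\cdots s_n)^{3s-1}s_{n+1}^{s}$, which $s<s_0$ does not control; the corrected quantity is $\le 1$ for large $n$ when $s<s_0$, and the $C^n$ loss is absorbed into $r^{-\epsilon}$ because $r<G_n\lesssim (s_1\cdots s_n)^{-2}$ and $n=o(\log(s_1\cdots s_n))$.
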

	
	\begin{lem}[{\cite[Remark 2]{ALBW13}}]\label{lem-4}
		For any $\beta>0$, the Hausdorff dimension of the set
		\[J_\beta^*:=\bigg\{x\in J_\beta:\ \lim_{n\to\infty}\frac{\log q_n(x)}{n}=\infty\bigg\},\]
		is $\frac{1}{\beta+2}$, where $J_\beta$ is defined in \eqref{JARNIK-1}.
	\end{lem}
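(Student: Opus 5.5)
The statement immediately above is Lemma~\ref{lem-4}, quoted from \cite[Remark 2]{ALBW13}. If I were to prove it rather than cite it, I would argue the two inequalities separately. Both rest on the same heuristic: the condition $\log q_n/n\to\infty$ forces the partial quotients to be unbounded on average. In the covering argument this costs the same factor $1/2$ as in Lemma~\ref{3}, compared with Good's $\frac{2}{\beta+2}$.

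\textbf{Upper bound.} Fix $\varepsilon>0$ and $M$ large. For $x\in J^*_\beta$ we have, for all large $n$, $q_n(x)\ge M^n$. For infinitely many $n$ we also have $a_{n+1}\ge q_n^\beta$. So $J^*_\beta$ is contained in the limsup of the sets
\[
\bigcup_{q_n\ge M^n}\ \bigcup_{a_{n+1}\ge q_n^\beta} I_{n+1}(a_1,\ldots,a_{n+1}).
\]
For fixed $(a_1,\ldots,a_n)$, the union over $a_{n+1}\ge q_n^\beta$ is an interval of length about $q_n^{-2-\beta}$. I would cover it by that single interval and sum $\sum q_n^{-s(2+\beta)}$ over admissible words with $q_n\ge M^n$. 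The key estimate is
\[
\sum_{a_1,\ldots,a_n} q_n^{-2t}\le \zeta(2t)^n C^n \quad\text{for } t>1/2 .
\]
Taking $s(2+\beta)=2t+\delta$ and using $q_n^{-\delta}\le M^{-n\delta}$ makes the sum geometric once $M^{\delta}>C\zeta(2t)$. Thus $s>\frac{1+\delta'}{2+\beta}$ suffices, which gives $\dim_{\rm H} J^*_\beta\le\frac1{2+\beta}$. The main obstacle is this step: one must check that the constant $C$ from comparing $q_n$ with $\prod a_k$ (Proposition~\ref{length of cylinder}(2)) does not depend on $M$. It does not, since $q_n\ge\prod a_k$ gives $q_n^{-2t}\le\prod a_k^{-2t}$ directly, so $C=1$.

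\textbf{Lower bound.} I would apply Lemma~\ref{3} with a sequence $s_n$ that grows slowly but superexponentially in product, with sparse spikes. Take $s_n=e^{n}$ except at a sparse lacunary set of indices $n_k$ (say $n_{k+1}=2^{n_k}$), where $s_{n_k+1}=(N^{n_k}\,s_1\cdots s_{n_k})^{\beta}$, adjusted so that $a_{n_k+1}\ge q_{n_k}^\beta$ holds via $q_n\le 2^n\prod N s_j$. Then $\log q_n/n\to\infty$ holds, and every point of the Cantor set lies in $J_\beta$. Since the spikes are sparse, they contribute negligibly to $\log(s_1\cdots s_n)$ at later times. The limsup in Lemma~\ref{3} equals $\beta+o(1)$, so the dimension is at least $\frac1{2+\beta}$. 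The only care needed is that $s_n\to\infty$ and that the spikes do not accumulate; the lacunarity handles this.
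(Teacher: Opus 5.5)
The paper does not prove this lemma; it quotes it from \cite[Remark 2]{ALBW13} and uses it only as a black box in the proof of Corollary~\ref{cor-Song}. Your argument is a correct self-contained proof, built from tools the paper already uses elsewhere.

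Your upper bound is the same covering scheme as the paper's upper bound for $F_{\alpha,\beta}$ in Section~\ref{section3}, with the condition $q_n\ge M^n$ playing the role of $a_j\ge M_{j-1}$. With $s(2+\beta)=2t+\delta$ and $2t>1$, you get $\sum_{q_n\ge M^n}q_n^{-s(2+\beta)}\le (M^{-\delta}\zeta(2t))^n$. This is summable once $M$ is large, and $M$ may be taken as large as you like because $J^*_\beta\subset\limsup_n E_n(M)$ for every $M$.

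Your lower bound is the same device as the paper's Cantor set $A_{\alpha,\beta}$: spikes of size roughly $(\text{product})^{\beta}$ at sparse positions, fed into Lemma~\ref{3}.

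Three small corrections:
\begin{itemize}
\item Since $q_{n}\le 2^{n}\prod_{j\le n}a_j<(2N)^{n}\prod_{j\le n}s_j$, the spike should be $s_{n_k+1}=\lceil((2N)^{n_k}s_1\cdots s_{n_k})^{\beta}\rceil$, not the version with $N^{n_k}$.
\item The base terms must be integers, e.g.\ $\lceil e^n\rceil$.
\item The lacunarity is not needed. At a spike the ratio in Lemma~\ref{3} is $\beta\bigl(1+O(n_k/\log(s_1\cdots s_{n_k}))\bigr)\to\beta$, and elsewhere it tends to $0$, whatever the spacing. Spikes at every index would also work.
\end{itemize}

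What your route gives over the citation is transparency. It shows where the factor $\frac12$ relative to Good's $\frac{2}{2+\beta}$ comes from: once $\log q_n/n\to\infty$, you may use any exponent $2t>1$ in $\sum q_n^{-2t}$, because the factor $M^{-n\delta}$ absorbs $\zeta(2t)^n$.
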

	
	\section{Proof of  Theorem \ref{thm}: the case $0<\alpha\leq\infty
		~\text{and}~
		0\leq\beta<\infty$}\label{section3}
	We first determine,  for $\alpha>0$, the range of $0\leq\beta<\infty$ for which $F_{\alpha,\beta}$ is nonempty. Throughout
	this section, we assume that
	\[
	0<\alpha\leq\infty
	\quad\text{and}\quad
	0\leq\beta<\infty.
	\]
	\subsection{A necessary condition}
	
	We begin by establishing a necessary condition for the non--emptiness of
	\(F_{\alpha,\beta}\).
	\begin{proof}[Proof of Proposition \ref{empty}]
		Suppose that $x\in F_{\alpha,\beta}$, we show that necessarily
		$B_\psi\leq1+\beta$.
		
		First, assume that $0<\alpha<\infty$. For every
		$\varepsilon>0$, there exists \(N_1:=N_1(x,\varepsilon)\) such that
		\begin{equation}\label{relationship of two}
			(\alpha-\varepsilon)\psi(n)
			\leq
			\log a_{n+1}(x)
			\leq
			(\beta+\varepsilon)\log q_n(x).
		\end{equation}
		for every \(n\geq N_1\).
		It follows from \eqref{sequence-qn} that, for all
		\(n\ge N_1\),
		\[
		\begin{split}
			\log q_{n+1}(x)
			&\leq
			\log (2a_{n+1}(x)q_n(x)) \leq
			(1+\beta+\varepsilon)\log q_n(x)+\log 2.
		\end{split}
		\]
		Since \(q_n(x)\ge 2^{\frac{n-1}{2}}\), the last term can be absorbed into
		\(\varepsilon\log q_n(x)\). Hence, we have
		\[
		\log q_{n+1}(x)
		\leq
		(1+\beta+2\varepsilon)\log q_n(x)
		\]
		for every \(n\geq N_1\). Iterating this inequality, we obtain
		\begin{equation}
			\label{qn-upper}
			\log q_n(x)
			\leq
			(1+\beta+2\varepsilon)^{n-N_1}\log q_{N_1}(x)
		\end{equation}
		for all  \(n\ge N_1\).
		Combining \eqref{sequence-qn},
		\eqref{relationship of two}, and
		\eqref{qn-upper}, we find that
		\[
		(\alpha-\varepsilon)\psi(n)
		\leq
		(\beta+\varepsilon)\log q_n(x)
		\leq
		(1+\beta+2\varepsilon)^{n-N_1}\log q_{N_1}(x)
		\]
		for all  $n\geq N_1$. Therefore,
		\[
		B_{\psi}
		=
		\limsup_{n\to\infty}\psi(n)^{1/n}
		\leq
		1+\beta+2\varepsilon.
		\]
		Letting $\varepsilon\to0$, we conclude that
		\[
		B_\psi\leq1+\beta.
		\]
		If $\alpha=\infty$, then for any fixed $M>0$ and $\varepsilon>0$, there exists \(N_2:=N_2(x, M, \varepsilon)\) such that
		\[M\psi(n)
		\leq
		\log a_{n+1}(x)
		\leq
		(\beta+\varepsilon)\log q_n(x)\]
		for all $n\geq N_2$. Repeating the argument above, with $M$ in place of
		$\alpha-\varepsilon$, again gives
		\[
		B_\psi\leq1+\beta.
		\]
		Therefore, if $B_\psi>1+\beta$, then
		$F_{\alpha,\beta}=\varnothing$.

	\end{proof}
	
	\subsection{The upper bound}
	The following proposition gives the upper bound required for Theorem~\ref{thm}.
	\begin{prop}
		For \(0<\alpha\leq\infty\) and \(0\leq\beta<\infty\),
		\[
		\dim_{\mathrm H}F_{\alpha,\beta}
		\leq
		\frac{1}{2+\beta}.
		\]
	\end{prop}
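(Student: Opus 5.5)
Because $\alpha>0$, every $x\in F_{\alpha,\beta}$ has $\log a_{n+1}(x)\ge c\,\psi(n)$ for all large $n$, with $c=\alpha/2$ (any $c>0$ if $\alpha=\infty$). Since $\psi(n)\to\infty$, this gives $a_n(x)\to\infty$. The plan is to show
\[
F_{\alpha,\beta}\subset\Bigl\{x:\ a_n(x)\to\infty,\ \limsup_{n\to\infty}\frac{\log a_{n+1}(x)}{\log q_n(x)}\le\beta\Bigr\}\cup\bigl(J^*_{\beta'}\text{-type sets}\bigr),
\]
and then bound the dimension by $1/(2+\beta)$ using Lemma~\ref{lem-4}. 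The dichotomy is driven by the growth of $\log q_n(x)/n$, so I split into two cases.

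\emph{Case $\beta>0$.} For any $0<\beta'<\beta$, each $x\in F_{\alpha,\beta}$ lies in $J_{\beta'}$. Moreover $a_n(x)\to\infty$ gives $\log q_n(x)\ge\sum_{k\le n}\log a_k(x)$, and this sum grows superlinearly in $n$. Hence $\log q_n(x)/n\to\infty$, and so $F_{\alpha,\beta}\subset J^*_{\beta'}$. Lemma~\ref{lem-4} then gives $\dim_{\rm H}F_{\alpha,\beta}\le 1/(\beta'+2)$, and letting $\beta'\uparrow\beta$ gives the claim.

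\emph{Case $\beta=0$.} Here we need $\dim_{\rm H}F_{\alpha,0}\le 1/2$. This follows from the classical Good result that $\{x: a_n(x)\to\infty\}$ has dimension $1/2$. That result can also be recovered by a direct covering argument, which I sketch because the same argument gives a self-contained proof of the case $\beta>0$ if one wants to avoid quoting Lemma~\ref{lem-4}.

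\emph{Direct covering.} Fix $M$ large and $N$ such that $a_k\ge M$ for $k\ge N$. For $n\ge N$, cover by the sets
\[
J_n(a_1,\dots,a_n):=\bigcup_{a_{n+1}\ge q_n^{\beta'}} I_{n+1}(a_1,\dots,a_{n+1}),
\]
whose length is about $1/(q_n^{2+\beta'})$, indexed over $a_{N},\dots,a_n\ge M$ (for $\beta=0$, simply use the cylinders $I_n$ themselves). The $s$-dimensional cost is then bounded by
\[
\sum_{n\ge N}\ \sum_{a_1,\dots,a_{N-1}}\ \sum_{a_N,\dots,a_n\ge M} q_n^{-(2+\beta')s}.
\]
Using $q_n\ge\prod a_k$ from Proposition~\ref{length of cylinder}, each factor contributes $\sum_{a\ge M}a^{-(2+\beta')s}\le 1/2$ once $s>1/(2+\beta')$ and $M$ is large. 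The inner sums therefore decay geometrically in $n$ and the series converges, which gives $\mathcal H^s=0$.

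\emph{Main obstacle.} The delicate point in the direct covering is the finitely many initial digits $a_1,\dots,a_{N-1}$, which are unrestricted. The sum over them diverges, since $\sum_a a^{-(2+\beta')s}$ diverges near the critical exponent if $s$ is close to $1/2$. The fix is to decompose $F_{\alpha,\beta}$ as a countable union over $N$ and over the fixed prefix $(a_1,\dots,a_{N-1})$, and then use countable stability of Hausdorff dimension. With the prefix fixed, $q_n\ge q_{N-1}\prod_{k=N}^n a_k$ by the recurrence, so the remaining sum is geometric and no divergence occurs. Letting $\beta'\uparrow\beta$ and $s\downarrow 1/(2+\beta')$ then finishes the proof.
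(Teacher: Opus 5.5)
Your proof is correct, but your primary route differs from the paper's. You reduce to known results. For $\beta>0$ you observe that $a_n\to\infty$ forces $\log q_n/n\to\infty$ (Cesàro applied to $\log q_n\ge\sum_{k\le n}\log a_k$), so $F_{\alpha,\beta}\subset J^*_{\beta'}$ for every $0<\beta'<\beta$, and you invoke Lemma~\ref{lem-4}. For $\beta=0$ you invoke Good's theorem on $\{a_n\to\infty\}$.

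The paper instead gives a self-contained covering argument. It fixes a prefix $\boldsymbol a$ of length $N$ and uses $a_j\ge M_{j-1}=\lfloor e^{c\psi(j-1)}\rfloor\to\infty$ for $j>N$. It then covers by the sets $J_n(\boldsymbol b)=\bigcup_{a\ge A_n(\boldsymbol b)}I_{n+1}(\boldsymbol b,a)$ of length at most $q_n^{-(2+\beta-\varepsilon)}$ and sums a geometric series. This is exactly your \emph{direct covering} sketch. The device $A_n=\max\{1,\lfloor q_n^{\beta-\varepsilon}\rfloor\}$ lets the paper treat $\beta=0$ and $\beta>0$ uniformly.

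Your reduction is shorter and makes it transparent that only $a_n\to\infty$ matters; the same is true of the paper's argument. It costs you a dependence on external results, plus a case split because Lemma~\ref{lem-4} is stated only for $\beta>0$. The paper's version proves everything directly.

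One correction to your \emph{main obstacle} paragraph: nothing actually diverges there. For $s>1/(2+\beta')$ the exponent $r=(2+\beta')s$ exceeds $1$, so $\sum_a a^{-r}$ converges. Hence the sum over the $N-1$ unrestricted initial digits is bounded by a finite constant raised to the power $N-1$. Fixing the prefix, as the paper also does, is convenient but not needed to avoid any divergence.
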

	
	\begin{proof}
		Fix \(c=\frac{\alpha}{2}\) if \(0<\alpha<\infty\), and set \(c=1\) if
		\(\alpha=\infty\). For every \(x\in F_{\alpha,\beta}\), there exists \(N:=N(x,c)\) such that
		\begin{equation}\label{an+1}
			\log a_{n+1}(x)\geq c\psi(n)
		\end{equation}
		for all  \(n\ge N\).  Set
		\(
		M_n:=\lfloor \exp\bigl(c\psi(n)\bigr)\rfloor.
		\)
		Since \(\psi(n)\to\infty\), we have \(M_n\to\infty\).

		Let $s>\frac{1}{2+\beta}$.
		Choose \(\varepsilon>0\) sufficiently small that
		\begin{equation}\label{def-r}
			r:=s(2+\beta-\varepsilon)>1.
		\end{equation}
		The second defining condition of $F_{\alpha,\beta}$ implies
		\begin{equation}
			\label{an>qn}
			a_{n+1}(x)\geq q^{\beta-\varepsilon}_n(x)
		\end{equation}
		for infinitely many \(n\in\N\).
		For \(N\in\mathbb N\) and
		\(\boldsymbol a=(a_1,\ldots,a_N)\in\mathbb N^N\), define
		\[
		F(N,\boldsymbol a)
		:=
		\left\{
		x\in F_\beta:
		(a_1(x),\ldots,a_N(x))=\boldsymbol a,\
		a_{n+1}(x)\geq M_n
		\text{ for all }n\geq N
		\right\}.
		\]
		By \eqref{an+1},
		\[
		F_{\alpha,\beta}
		\subset
		\bigcup_{N=1}^{\infty}
		\bigcup_{\boldsymbol a\in\mathbb N^N}
		F(N,\boldsymbol a).
		\]
		
		For \(n\geq N\), let
		\[
		\mathcal A_n(N,\boldsymbol a)
		:=
		\left\{
		(\boldsymbol a,a_{N+1},\ldots,a_n)\in\mathbb N^n:
		a_j\geq M_{j-1},\
		N+1\leq j\leq n
		\right\}.
		\]
		For \(\boldsymbol{b}\in\mathcal A_n(N,\boldsymbol{a})\),  define
		\(
		A_n(\boldsymbol{b})
		:=
		\max\left\{
		1,
		\left\lfloor  q^{\beta-\varepsilon}_n(\boldsymbol{b})\right \rfloor
		\right\},
		\)
		and
		\[
		J_n(\boldsymbol{b})
		:=
		\bigcup_{a\geq A_n(\boldsymbol{b})}
		I_{n+1}(a_1,\ldots,a_n,a).
		\]
		By \eqref{an>qn}, for every $m\geq N$,
		\[
		F(N,\boldsymbol{a})
		\subseteq
		\bigcup_{n=m}^{\infty}
		\ \bigcup_{\mathbf b\in\mathcal A_n(N,\boldsymbol{a})}
		J_n(\boldsymbol{b}).
		\]
		Using Proposition \ref{length of cylinder},
		we obtain
		\begin{equation}\label{cover}
			\begin{aligned}
				\bigl|J_n(\boldsymbol{b})\bigr|
				&\le
				\frac{1}{q^2_n(\boldsymbol{b})}
				\sum_{a\geq A_n(\boldsymbol{b})}\frac{1}{a^2}
				\notag\le
				q^{-(2+\beta-\varepsilon)}_n(\boldsymbol{b}),
		\end{aligned}\end{equation}
		where the last inequality also holds when \(\beta<\varepsilon\), since in this case
		\(A_n(\boldsymbol{b})=1\) and
		\(
		q_n^{-2}\leq q_n^{-(2+\beta-\varepsilon)}.
		\)
		
		It remains to estimate the \(s\)--dimensional cost of this cover.
		By \eqref{def-r} and (2) of Proposition \ref{length of cylinder}, we conclude that
		\[
		\begin{aligned}
			\sum_{\boldsymbol{b}\in\mathcal A_n(N,\mathbf a)}
			|J_n(\boldsymbol{b})|^s
			&\leq
			\sum_{\boldsymbol{b}\in\mathcal A_n(N,\mathbf a)}
			q^{-r}_n(\boldsymbol{b})\leq
			q^{-r}_N(\boldsymbol{a})
			\prod_{j=N+1}^{n}
			\Bigl(
			\sum_{a\geq M_{j-1}}a^{-r}
			\Bigr).
		\end{aligned}
		\]
		Since $r>1$ and $M_n\to\infty$, there exists $N_0$ such that
		\[
		\sum_{a\geq M_{j-1}}a^{-r}\leq\frac12
		\]
		for every $j\geq N_0+1$. We may restrict attention to
		\(N\geq N_0\). Hence, for $n\geq N\geq N_0$,
		\[
		\sum_{\boldsymbol{b}\in \mathcal A_n(N,\mathbf a)}
		|J_n(\boldsymbol{b})|^s
		\leq
		q^{-r}_N(\mathbf a)2^{-(n-N)}.
		\]
		Therefore, for every \(m\geq N\),
		\[
		\sum_{n=m}^{\infty}
		\sum_{\boldsymbol b\in\mathcal A_n(N,\boldsymbol a)}
		|J_n(\boldsymbol b)|^s
		\leq
		2^{s+1}
		q^{-r}_N(\boldsymbol a)
		2^{-(m-N)}.
		\]
		The right--hand side tends to \(0\) as \(m\to\infty\). Moreover,
		\(q_n\geq2^{(n-1)/2}\), so the diameters of the sets in the above
		covers tend to \(0\). It follows that
		\[
		\mathcal H^s\bigl(F(N,\boldsymbol a)\bigr)=0.
		\]
		Since \(F_{\alpha,\beta}\) is contained in a countable union of such
		sets,
		\[
		\mathcal H^s(F_{\alpha,\beta})=0.
		\]
		Since $s>\frac{1}{2+\beta}$ was arbitrary, the desired upper bound
		follows.
	\end{proof}
	
	\subsection{The lower bound}
	Assume $B_\psi\leq1+\beta$. To prove the lower bound, we construct a Cantor subset of $F_{\alpha,\beta}$ and estimate its Hausdorff dimension.
	\subsubsection{ Construction of a Cantor set.}\label{cantor set}
	We construct a sequence that will prescribe the logarithms of the partial
	quotients in the Cantor set used below.
	
	We first choose a sequence $\{\lambda_n\}_{n\geq1}$
	such that
	\begin{equation}\label{lambda-prop}
		\lambda_n>1+\beta,\quad
		\lim\limits_{n\to\infty}\lambda_n=1+\beta,
		\quad\text{and}\quad
		\lim\limits_{n\to\infty}\frac{\psi(n)}{\lambda_1\cdots\lambda_n}= 0.
	\end{equation}
	For completeness, such a sequence can be constructed as follows.  Let
	\[
	c_n:=\max\left\{\log\psi(n)-n\log (1+\beta),0\right\},
	\qquad
	\eta_n:=\sup_{m\geq n}\frac{c_m}{m}.
	\]
	The definition of $B_\psi$ gives $\eta_n\ge \eta_{n+1}$ for any $n\ge1$ and $\lim\limits_{n\to\infty}\eta_n=0$.  Set
	\[
	\delta_n:=\sqrt{\eta_n}+\frac{1}{\sqrt n},
	\quad
	\lambda_n:=(1+\beta)\mathrm{e}^{\delta_n}.
	\]
	Then $\lambda_n>1+\beta$ and $\lim\limits_{n\to\infty}\lambda_n= 1+\beta$.  Moreover, by the definition of $\eta_n$, we have
	$\eta_j\geq\eta_n\geq \frac{c_n}{n}$ whenever $j\leq n$. Hence, we have, for all
	sufficiently large $n$,
	\[
	\sum_{j=1}^n\delta_j
	\geq n\sqrt{\eta_n}+\sum_{j=1}^n\frac1{\sqrt j}
	\geq \sqrt{nc_n}+\sum_{j=1}^n\frac1{\sqrt j}
	\geq c_n+\sum_{j=1}^n\frac1{\sqrt j}.
	\]
	Consequently,
	\[
	\log\frac{\psi(n)}{\lambda_1\cdots\lambda_n}
	\leq c_n-\sum_{j=1}^n\delta_j\le -\sum_{j=1}^n\frac1{\sqrt j}
	\to-\infty,
	\]
	which proves \eqref{lambda-prop}.
	
	For $n\geq1$, define
	\begin{equation}\label{def-Ln}
		\ell_{n}:=\log L_{n}
		:=
		\alpha\sup_{m\geq n}
		\bigg\{
		\psi(m)\prod_{j=n}^{m-1}\lambda_j^{-1}
		\bigg\},
	\end{equation}
	where an empty product is defined as $1$. If $\alpha=\infty$, replacing \(\alpha\) with $n$ in the formula \eqref{def-Ln}, and it does not affect the subsequent operations. By
	\eqref{lambda-prop}, for every fixed $n$,
	\[
	\psi(m)\prod_{j=n}^{m-1}\lambda_j^{-1}
	=
	\frac{\lambda_m\psi(m)}{\lambda_1\cdots\lambda_m}
	\left(\lambda_1\cdots\lambda_{n-1}\right)
	\longrightarrow0.
	\]
	Thus, the supremum in \eqref{def-Ln} is finite and is attained.  Taking
	$m=n$ gives
	\begin{equation}\label{ell-properties1}
		\ell_n\geq\alpha\psi(n)(\alpha<\infty)\quad \text{and}\quad \ell_n\geq n\psi(n)(\alpha=\infty).
	\end{equation}
	Furthermore, we have
	\begin{equation}\label{ell-properties2}
		\ell_{n+1} \leq \lambda_n\ell_n(\alpha<\infty) \quad \text{and}\quad \ell_{n+1} \leq \frac{n+1}{n}\lambda_n\ell_n(\alpha=\infty).
	\end{equation}
	
	Write
	\[
	A_n:=\sum_{j=1}^{n-1}\ell_j,\quad n\geq2.
	\]
	Since $\psi(n)\to\infty$, it follows from
	\eqref{ell-properties1} that $A_n\to\infty$. We claim that
	\begin{equation}\label{ell-beta}
		\limsup_{n\to\infty}\frac{\ell_n}{A_n}\leq\beta.
	\end{equation}
	Indeed, by \eqref{ell-properties2}
	\[
	\ell_n-\ell_1
	=
	\sum_{j=1}^{n-1}(\ell_{j+1}-\ell_j)
	\leq
	\sum_{j=1}^{n-1}\Bigl(\frac{j+1}{j}\lambda_j-1\Bigr)\ell_j.
	\]
	For every $\varepsilon>0$, we have
	$\frac{j+1}{j}\lambda_j-1\leq\beta+\varepsilon$ for all
	$j\ge N$. Therefore,
	\[
	\ell_n
	\leq
	\sum_{j=1}^{N}\Bigl(\frac{j+1}{j}\lambda_j-1-\beta-\epsilon\Bigr)\ell_j
	+
	(\beta+\varepsilon)A_n.
	\]
	Dividing by $A_n$ and then
	letting $\varepsilon\to0$ proves \eqref{ell-beta}.
	Hence we  choose a positive sequence $\lim\limits_{n\to\infty}\theta_n=0$ such that
	\begin{equation}\label{ell-theta}
		\ell_n\leq(\beta+\theta_n)A_n,\quad n\geq2.
	\end{equation}
	
	We next locate infinitely many indices at which the envelope attains the prescribed scale. For every \(n\geq1\), let \(t_n\geq n\) be the smallest index at which the supremum in \eqref{def-Ln} is attained. If \(t_n>n\), then when passing from \(n\) to \(n+1\), every remaining candidate is multiplied by the same factor  \(\lambda_n\) if \(\alpha<\infty\), and \(\frac{n+1}{n}\lambda_n\) if \(\alpha=\infty\). Hence \(t_{n+1}=t_n\). Repeating this argument gives
	\[
	t_n=t_{n+1}=\dots=t_{t_n},
	\]
	the identity is immediate when \(t_n=n\).
	
	Let \(r_1<r_2<\dots\) be the distinct values of $\{t_n\}_{n\geq1}$, arranged in increasing order. This sequence is infinite because \(t_n\geq n\). Moreover, \(t_{r_k}=r_k\), so the term indexed by \(m=r_k\) attains the supremum in \eqref{def-Ln} at \(n=r_k\). Consequently,
	\begin{equation}\label{ell-r_k}
		\ell_{r_k}=
		\begin{cases}
			\alpha\psi(r_k),&\alpha<\infty,\\
			r_k\psi(r_k),&\alpha=\infty,
		\end{cases}
		\qquad k\geq1.
	\end{equation}
	Define the sparse
	set of exceptional positions
	\begin{equation*}
		\mathcal N:=\{N_k:k\geq1\},
		\qquad N_k:=r_{2^k}.
	\end{equation*}
	
	Define the sequence $\{u_n\}_{n\geq1}$ by
	\begin{equation}\label{define}
		u_n
		:=
		\begin{cases}
			L_n,
			& n\notin\mathcal N,\\[2mm]
			\max\{L_n,{(u_1\cdots u_{n-1})}^{\beta+\theta_n}\},
			& n\in\mathcal N.
		\end{cases}
	\end{equation}
	We define the Cantor subset as follows:
	\begin{equation*}
		A_{\alpha,\beta}:=\bigl\{x=[a_1,a_2,\ldots]: a_1=1,\
		\left \lceil u_n\right \rceil
		\leq a_{n+1}
		<2\left\lceil u_n\right\rceil
		\quad\text{for any }n\geq1\bigr\},
	\end{equation*}
	where \(\lceil x\rceil\) denotes the smallest integer greater than or equal to \(x\).	
	It remains to verify the subset relation.
	\begin{lem}\label{subset lem} Assume $B_{\psi}\leq 1+\beta$. For \(0<\alpha\leq\infty\) and \(0\leq\beta<\infty\), we have
		\[
		A_{\alpha,\beta}\subset F_{\alpha,\beta}.
		\]
	\end{lem}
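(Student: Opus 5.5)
Fix $x=[a_1,a_2,\ldots]\in A_{\alpha,\beta}$. Then $\log a_{n+1}=\log u_n+O(1)$, because $u_n\ge L_n\ge 1$ and $\lceil u_n\rceil\le a_{n+1}<2\lceil u_n\rceil$. Also $\log q_n=\sum_{j=1}^{n}\log a_j+O(n)=\sum_{j=1}^{n-1}\log u_j+O(n)$ by Proposition \ref{length of cylinder}(2). I will verify the two limit conditions separately. The first preliminary step is to control the exceptional positions. For $n=N_k\in\mathcal N$, the definition \eqref{define} together with \eqref{ell-theta} shows that $u_n$ is at most roughly $\max\{L_n,(u_1\cdots u_{n-1})^{\beta+\theta_n}\}$. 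Since $\mathcal N$ is very sparse ($N_k=r_{2^k}$), I expect $\sum_{j<n}\log u_j=(1+o(1))\sum_{j<n}\ell_j=(1+o(1))A_n$. I will prove this by induction on the blocks between consecutive $N_k$. The key point is that the inserted value $\log u_{N_k}\le(\beta+\theta)\sum_{j<N_k}\log u_j$ is swamped by the geometric growth $\ell_{j+1}\ge\ldots$ before the next exceptional index. This needs a lower bound on the growth of $A_n$ between the $N_k$; if such a bound is unavailable, I will instead carry the weaker two-sided bound $\sum_{j<n}\ell_j\le\sum_{j<n}\log u_j\le C\sum_{j<n}\ell_j$ and show that it suffices.

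\emph{The liminf condition.} For every $n$ we have $\log a_{n+1}\ge \ell_n+O(1)\ge\alpha\psi(n)+O(1)$ by \eqref{ell-properties1}, which gives $\liminf\ge\alpha$; for $\alpha=\infty$ we use $\ell_n\ge n\psi(n)$ instead. For the reverse inequality I use the indices $r_k$ with $r_k\notin\mathcal N$, of which there are infinitely many (for example all $r_k$ with $k$ not a power of $2$). At these indices $u_{r_k}=L_{r_k}$, so $\log a_{r_k+1}=\alpha\psi(r_k)+O(1)$ by \eqref{ell-r_k}. Since $\psi\to\infty$, the ratio tends to $\alpha$. For $\alpha=\infty$ the lower bound alone already gives the limit $\infty$.

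\emph{The limsup condition.} This is the main obstacle. For the upper bound, consider first $n\notin\mathcal N$. There $\log a_{n+1}=\ell_n+O(1)\le(\beta+\theta_n)A_n+O(1)$ by \eqref{ell-theta}, and $A_n\le\sum_{j<n}\log u_j\le\log q_n+O(n)$. For $n\in\mathcal N$, either the $L_n$ term is the larger one and the same bound applies, or $\log u_n=(\beta+\theta_n)\sum_{j<n}\log u_j\le(\beta+\theta_n)(\log q_n+O(n))$. The $O(n)$ error is negligible because $\ell_n\ge\alpha\psi(n)$ and therefore $A_n/n\to\infty$; this holds since $\psi\to\infty$ forces the Cesàro averages of $\ell_j$ to diverge. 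Hence $\limsup\le\beta$. For the lower bound, at $n=N_k$ we have $\log a_{n+1}\ge(\beta+\theta_n)\sum_{j<n}\log u_j\ge(\beta+\theta_n)(\log q_n-O(n))$, and dividing by $\log q_n$ gives ratios tending to $\beta$. When $\beta=0$ the lower bound is trivial. Note that the lower bound does not require the comparison $\sum\log u_j\approx A_n$ at all; only $\log q_n\asymp\sum_{j<n}\log u_j$ and $n=o(\log q_n)$ are used. The comparison is needed only when the bound $\ell_n\le(\beta+\theta_n)A_n$ is converted into a bound relative to $\log q_n$, and there the inequality $A_n\le\sum_{j<n}\log u_j$ (true since $u_j\ge L_j$) already points in the right direction. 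So the only delicate point is the $O(n)$ error, which I handle as above.
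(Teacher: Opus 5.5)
Your proof is correct and follows the paper's argument. The liminf comes from $u_n\ge L_n$ together with the non-exceptional indices $r_j$ ($j$ not a power of $2$), where $u_{r_j}=L_{r_j}$ and $\ell_{r_j}=\alpha\psi(r_j)$. The limsup comes from $A_n\le S_n\le\log q_n\le S_n+n\log4$ (with $S_n=\sum_{j<n}\log u_j$), the bounds $\log u_n\le(\beta+\theta_n)S_n$ for all $n$ and $\log u_{N_k}\ge(\beta+\theta_{N_k})S_{N_k}$, and $A_n/n\to\infty$.

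You should delete your ``preliminary step''. As you note yourself, it is never used, and it is false in general. We have $S_{N_k+1}\ge(1+\beta)S_{N_k}\ge(1+\beta)A_{N_k}$. For example, when $\psi(n)=\log(n+1)$, one has $A_{N_k+1}=(1+o(1))A_{N_k}$, so $S_n/A_n\not\to1$ for any $\beta>0$. For $\beta>1$ the ratio is even unbounded along $N_k$, because there $A_{N_{k+1}}/A_{N_k}\to2$.
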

	\begin{proof}
		Fix $x\in A_{\alpha,\beta}$.
		We first show that
		\begin{equation}\label{liminf condition}
			\liminf_{n\to\infty}\frac{\log (a_{n+1}(x))}{\psi(n)}=\alpha.
		\end{equation}
		Suppose first that $0<\alpha<\infty$. By
		\eqref{ell-properties1} and $u_n\geq L_n$,
		\[
		\log a_{n+1}(x)\geq\log u_n\ge\ell_n\geq\alpha\psi(n),
		\]
		and hence
		\begin{equation*}
			\liminf_{n\to\infty}
			\frac{\log a_{n+1}(x)}{\psi(n)}\geq\alpha.
		\end{equation*}
		There are infinitely many indices $j$ that are not powers of
		$2$. For such $j$, $r_j\notin\mathcal N$, and hence
		\[
		u_{r_j}=L_{r_j}.
		\]
		Since \(u_n\geq1\), the defining of
		\(A_{\alpha,\beta}\) give
		\[
		a_{n+1}(x)
		<
		2\lceil u_n\rceil
		\leq4u_n.
		\]
		Using \eqref{ell-r_k}, we obtain
		\[
		\frac{\log a_{r_j+1}(x)}{\psi(r_j)}
		\le
		\alpha+\frac{\log4}{\psi(r_j)}
		\leq\alpha.
		\]
		Since \(\psi(r_j)\to\infty\),
		\[
		\liminf_{n\to\infty}
		\frac{\log a_{n+1}(x)}{\psi(n)}
		\leq\alpha.
		\]
		Thus \eqref{liminf condition} holds when
		\(0<\alpha<\infty\).
		
		If \(\alpha=\infty\), then by \eqref{ell-properties1},
		\[
		\log a_{n+1}(x)
		\geq
		\log u_n
		\geq
		\ell_n
		\geq
		n\psi(n).
		\]
		Therefore,
		\[
		\liminf_{n\to\infty}
		\frac{\log a_{n+1}(x)}{\psi(n)}
		=
		\infty.
		\]
		This proves \eqref{liminf condition} in all cases.
		
		We next prove that
		\begin{equation}\label{limsup condition}\limsup_{n\to\infty}\frac{\log (a_{n+1}(x))}{\log q_n(x)}=\beta.
		\end{equation} For $n\geq2$,
		put
		\(
		S_n:=\sum_{j=1}^{n-1}\log u_j.\)
		Since $u_j\geq L_j$, we have $S_n\geq A_n$. If
		$n\notin\mathcal N$, then by \eqref{ell-theta}
		\begin{equation}\label{un-sn1}
			\frac{\log u_n}{S_n}
			=
			\frac{\ell_n}{S_n}
			\leq
			\frac{\ell_n}{A_n}
			\leq
			\beta+\theta_n.
		\end{equation}
		If $n\in\mathcal N$, then by \eqref{ell-theta}
		\[
		\ell_n
		\leq
		(\beta+\theta_n)A_n
		\leq
		(\beta+\theta_n)S_n.
		\]
		Hence the second term in the maximum
		\eqref{define} is dominant, and then
		\begin{equation}\label{un-sn2}
			\frac{\log u_n}{S_n}=\frac{(\beta+\theta_n)\log (u_1\cdots u_{n-1})}{S_n }
			=
			\beta+\theta_n.
		\end{equation}
		
		By part (2) of Proposition \ref{length of cylinder} and the bounds defining
		$A_{\alpha,\beta}$, we have
		\begin{equation}\label{qn-sn}
			S_n
			\leq
			\log q_n(x)
			\leq
			S_n+n\log4.
		\end{equation}
		Moreover, the defining bounds for $A_{\alpha,\beta}$ give
		\begin{equation}\label{an-un}
			0\leq\log a_{n+1}(x)-\log u_n<\log4.
		\end{equation}
		By \eqref{ell-properties1} and $\psi(n)\to\infty$, we have
		$\lim\limits_{n\to\infty}\frac{A_n}{n}=\infty$.
		Since $S_n\geq A_n$, it follows that
		$\lim\limits_{n\to\infty}\frac{S_n}{n}=\infty$.
		By \eqref{un-sn1}, \eqref{qn-sn} and \eqref{an-un}, we obtain
		\[
		\frac{\log a_{n+1}(x)}{\log q_n(x)}
		\leq
		\frac{\log u_n+\log4}{S_n}
		\leq
		\beta+\theta_n+\frac{\log4}{S_n}.
		\]
		Hence
		\[
		\limsup_{n\to\infty}
		\frac{\log a_{n+1}(x)}{\log q_n(x)}
		\leq\beta.
		\]
		
		On the other hand, along the subsequence
		\(n=r_{2^k}\in\mathcal N\), we have
		\[
		\log u_n=(\beta+\theta_n)S_n.
		\]
		Therefore, by \eqref{un-sn2}, \eqref{qn-sn} and \eqref{an-un},
		\[
		\frac{\log a_{n+1}(x)}{\log q_n(x)}
		\geq
		\frac{\log u_n}{S_n+3n\log2}
		=
		(\beta+\theta_n)
		\frac{S_n}{S_n+3n\log2}.
		\]
		Since \(\theta_n\to0\), we conclude that
		\[
		\limsup_{n\to\infty}
		\frac{\log a_{n+1}(x)}{\log q_n(x)}
		\geq\beta.
		\]
		This proves \eqref{limsup condition}. Therefore, Combining with \eqref{liminf condition} and \eqref{limsup condition}, we obtain $x\in F_{\alpha,\beta}$. The proof is complete.
	\end{proof}

	\subsubsection{The lower bound of $\hdim F_{\alpha,\beta}$}
	
	\begin{prop} For \(0<\alpha\leq\infty\) and \(0\leq\beta<\infty\), we have
		
		\[
		\dim_{\mathrm H}F_{\alpha,\beta}\geq\frac{1}{2+\beta}.
		\]
	\end{prop}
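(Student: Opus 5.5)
Throughout we assume $B_\psi\le 1+\beta$, since otherwise $F_{\alpha,\beta}=\emptyset$ by Proposition~\ref{empty} and there is nothing to prove. By Lemma~\ref{subset lem} we have $A_{\alpha,\beta}\subset F_{\alpha,\beta}$. It therefore suffices to show
\[
\dim_{\mathrm H}A_{\alpha,\beta}\geq\frac1{2+\beta}.
\]
We will do this with Lemma~\ref{3}. Set $s_1:=1$ and $s_{n+1}:=\lceil u_n\rceil$ for $n\ge1$. Then $A_{\alpha,\beta}$ is contained in, and in fact equals up to the first digit, a set of the form $\{x: s_n\le a_n(x)<2s_n \text{ for all } n\}$. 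The first digit is fixed to $1$, so we use the bound $[s_1,2s_1)=\{1\}$ there. Since $u_n\ge L_n\to\infty$, we have $s_n\to\infty$, as Lemma~\ref{3} requires. If one insists on $s_1$ tending along with the rest, the finitely many initial constraints do not affect the liminf formula, so Lemma~\ref{3} applies with $N=2$.

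Lemma~\ref{3} then gives
\[
\dim_{\mathrm H}A_{\alpha,\beta}=\Bigl(2+\limsup_{n\to\infty}\frac{\log s_{n+1}}{\log(s_1\cdots s_n)}\Bigr)^{-1}.
\]
The key step is to show that this limsup is at most $\beta$. Write $S_n=\sum_{j=1}^{n-1}\log u_j$. Then
\[
\log(s_1\cdots s_n)=\sum_{j=1}^{n-1}\log\lceil u_j\rceil\ \ge\ S_n,
\qquad
\log s_{n+1}\le \log u_n+\log 2 .
\]
By \eqref{un-sn1} for $n\notin\mathcal N$ and \eqref{un-sn2} for $n\in\mathcal N$, we have $\log u_n\le(\beta+\theta_n)S_n$ for all large $n$. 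This step uses \eqref{ell-theta} together with $S_n\ge A_n$. Hence
\[
\frac{\log s_{n+1}}{\log(s_1\cdots s_n)}\le \beta+\theta_n+\frac{\log 2}{S_n}.
\]
Since $\theta_n\to0$ and $S_n\ge A_n\to\infty$, the limsup is at most $\beta$. This yields $\dim_{\mathrm H}A_{\alpha,\beta}\ge 1/(2+\beta)$, and therefore the proposition.

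The only delicate point is matching the Cantor set exactly to the hypothesis of Lemma~\ref{3}: the lemma requires positive integers tending to infinity and bounds $s_n\le a_n<Ns_n$ for all $n$. Our set has the shifted indexing $a_{n+1}\in[\lceil u_n\rceil,2\lceil u_n\rceil)$ and the fixed first digit $a_1=1$. The reindexing above handles both. If needed, the map $x\mapsto T x$, a bi-Lipschitz map on the cylinder $I_1(1)$, reduces to a set where every digit satisfies the lemma's constraint with $s_n=\lceil u_n\rceil\to\infty$, and Hausdorff dimension is preserved under this map. In either formulation the same limsup computation applies, since shifting the index by one only changes $\log(s_1\cdots s_n)$ by a bounded amount relative to $S_n\to\infty$.
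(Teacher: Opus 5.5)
Your argument is correct under the hypothesis $B_\psi\le 1+\beta$ and is essentially the paper's proof. You use $A_{\alpha,\beta}\subset F_{\alpha,\beta}$ (Lemma~\ref{subset lem}), then Lemma~\ref{3} with $s_1=1$, $s_{n+1}=\lceil u_n\rceil$, and the bound $\log u_n\le(\beta+\theta_n)S_n$ from \eqref{un-sn1}--\eqref{un-sn2}. Your treatment of the index shift, the ceilings and the fixed first digit is more careful than the paper's one-line version, and you rightly note that only $\limsup\le\beta$ is needed.

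One correction: when $B_\psi>1+\beta$ it is not true that ``there is nothing to prove''. In that case $F_{\alpha,\beta}=\emptyset$ has dimension $0<\frac{1}{2+\beta}$, so the inequality fails. The proposition holds only under the standing assumption $B_\psi\le 1+\beta$ made at the start of the lower-bound subsection.
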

	
	\begin{proof}
		From Lemmas \ref{3} and \ref{subset lem}, we have
		\begin{equation*}
			\dim_{\mathrm H}F_{\alpha,\beta}\ge \dim_{\mathrm H}A_{\alpha,\beta}\ge \frac{1}{2+\limsup_{n\to\infty}\frac{\log u_{n+1}}{\log (u_1u_2\cdots u_n)}}.
		\end{equation*}
		By \eqref{un-sn1} and $\eqref{un-sn2}$, we conclude that
		\[
		\limsup_{n\to\infty}\frac{\log u_{n+1}}{\log (u_1u_2\cdots u_n)}=\beta.
		\]
		Then, we obtain
		\begin{equation*}
			\dim_{\mathrm H}F_{\alpha,\beta}\ge \frac{1}{2+\beta}.
		\end{equation*}
	\end{proof}
	\section{Proof of  Theorem \ref{thm}: the remaining cases}\label{The remaining cases}
	\subsection{The case $\alpha=0$}
	We first give a proof of Proposition \ref{lem-insertion}. This allows us to impose the required lower--limit condition without affecting the dimension.
	\begin{proof}[Proof of Proposition \ref{lem-insertion}]

		We shall use the faithful covering family introduced in
		\cite[Theorem~2.2]{Liu16}. For $n\geq1$, let $\mathcal A_n$
		consist of all fundamental intervals of order $n$ of the form
		\[
		J_n(a_1,\ldots,a_{n-1};s,t)
		:=
		\bigcup_{d=s}^{t}
		I_n(a_1,\ldots,a_{n-1},d),
		\qquad 1\leq s\leq t,
		\]
		and put
		\(
		\mathcal A:=\bigcup_{n\geq1}\mathcal A_n.
		\)
		By \cite[Theorem~2.2]{Liu16}, the family $\mathcal A$ is
		faithful for Hausdorff dimension; that is, the Hausdorff
		dimension of every subset of $[0,1]$ is preserves when only covers by elements of $\mathcal A$. Set  $B:=\sup_{k\ge1}c_k<\infty$.
		
		\medskip
		\noindent
		\emph{Step 1.} We compare the lengths of corresponding fundamental intervals.
		
		Write
		\[
		\mathbb N\setminus\mathcal N
		=
		\{\nu_1<\nu_2<\cdots\}.
		\]
		Thus $\nu_n$ is the position occupied by the $n$-th original
		partial quotient after insertion. Equivalently,
		\[
		b_{\nu_n}=a_n,
		\qquad
		r(\nu_n)=\nu_n-n.
		\]

		Let
		\(
		U:
		=
		J_n(a_1,\ldots,a_{n-1};s,t)
		=
		\bigcup_{d=s}^{t}
		I_n(a_1,\ldots,a_{n-1},d)
		\in\mathcal A_n.
		\)
		Insert all prescribed digits $c_k$ whose positions lie in
		$\mathcal N\cap[1,\nu_n-1]$ into the fixed prefix
		$(a_1,\ldots,a_{n-1})$, and denote the resulting prefix by
		\(
		(b_1,\ldots,b_{\nu_n-1}).
		\)
		Since $\nu_n\notin\mathcal N$, the $n$-th original digit
		occupies position $\nu_n$ after insertion. Define
		\[
		\widetilde U
		:=
		\bigcup_{d=s}^{t}
		I_{\nu_n}(b_1,\ldots,b_{\nu_n-1},d)
		\in\mathcal A_{\nu_n}.
		\]
		By construction,
		\begin{equation}\label{3.2}
			\iota(E\cap U)\subset\widetilde U.
		\end{equation}
		
		Let $q_{n-1}$ be the denominator associated with the original
		prefix $(a_1,\ldots,a_{n-1})$, and let
		$\widetilde q_{\nu_n-1}$ be the denominator associated with
		the inserted prefix $(b_1,\ldots,b_{\nu_n-1})$.
		Hence
		\begin{equation}\label{3.3}
			q_{n-1}
			\leq
			\widetilde q_{\nu_n-1}
			\leq
			(B+1)^{r(\nu_n)}q_{n-1}.
		\end{equation}
		Proposition \ref{length of cylinder} gives
		\begin{equation}\label{3.4}
			|U|
			=
			\frac{t-s+1}
			{\bigl(sq_{n-1}+q_{n-2}\bigr)
				\bigl((t+1)q_{n-1}+q_{n-2}\bigr)}.
		\end{equation}
		The analogous formula holds for $\widetilde U$, with
		$q_{n-1},q_{n-2}$ replaced by
		$\widetilde q_{\nu_n-1},\widetilde q_{\nu_n-2}$.
		Combining \eqref{3.3} and \eqref{3.4} yields
		\begin{equation}\label{block-comparison}
			\frac{|U|}
			{4(B+1)^{2r(\nu_n)}}
			\leq
			|\widetilde U|
			\leq
			4|U|.
		\end{equation}
		
		\noindent
		\emph{Step 2.} We prove that $\dim_{\rm H}\iota(E)=\dim_{\rm H}E$.
		
		Let $\{J_{n_i}\}$ be a sufficiently fine $\mathcal A$-cover of
		$E$, where
		\(
		J_{n_i}\in\mathcal A_{n_i}.
		\)
		For each $J_{n_i}$, let $\widetilde J_{n_i}$ be the corresponding
		fundamental interval constructed in Step~1. By \eqref{3.2},
		\(
		\iota(E)\subset\bigcup_i\widetilde J_{n_i}.
		\)
		Therefore, by \eqref{block-comparison},
		for every $s>0$,
		\[
		\sum_i|\widetilde J_{n_i}|^s
		\leq
		4^s\sum_i|J_{n_i}|^s.
		\]
		Since $\mathcal A$ is faithful for Hausdorff dimension, it
		follows that
		\begin{equation}\label{upper-dim}
			\dim_{\rm H}\iota(E)
			\leq
			\dim_{\rm H}E.
		\end{equation}
		
		For the reverse inequality, we start with a fundamental interval
		\[
		\widetilde V
		=
		\bigcup_{d=s}^{t}
		I_n(b_1,\ldots,b_{n-1},d)
		\in\mathcal A_n
		\]
		such that
		\(
		\widetilde V\cap\iota(E)\neq\varnothing.
		\)
		We associate with $\widetilde V$ a fundamental interval
		$V\in\mathcal A$ covering its preimage under $\iota$.
		There are two cases.
		
		\smallskip
		\noindent
		\emph{Case 1: $n\notin\mathcal N$.}
		The $n$-th position corresponds to an original partial
		quotient. Deleting all prescribed inserted digits among
		$b_1,\ldots,b_{n-1}$ gives an original prefix and hence a
		fundamental interval $V\in\mathcal A$ such that
		\[
		\iota^{-1}
		\bigl(\widetilde V\cap\iota(E)\bigr)
		\subset V.
		\]
		The same comparison as in Step~1 gives
		\[
		|V|
		\leq
		4(B+1)^{2r(n)}|\widetilde V|.
		\]
		
		\smallskip
		\noindent
		\emph{Case 2: $n=N_k\in\mathcal N$.}
		At the $n$-th position every point of $\iota(E)$ has the
		prescribed digit $c_k$. Since
		\(
		\widetilde V\cap\iota(E)\neq\varnothing,
		\)
		and
		\(
		c_k\in[s,t].
		\)
		Hence
		\[
		\widetilde V\cap\iota(E)
		\subset
		\widetilde W,
		\]
		where
		\(
		\widetilde W
		:=
		I_n(b_1,\ldots,b_{n-1},c_k)
		\subset\widetilde V.
		\)
		After deleting all prescribed inserted digits from
		$\widetilde W$, we obtain a cylinder $V$ in the original
		continued fraction expansion such that
		\[
		\iota^{-1}
		\bigl(\widetilde V\cap\iota(E)\bigr)
		\subset V.
		\]
		Part (3) of Proposition \ref{length of cylinder}
		gives
		\[
		|V|
		\le
		2(B+1)^{2r(n)}|\widetilde W|
		\le
		2(B+1)^{2r(n)}|\widetilde V|.
		\]
		
		Thus, in either case, we can associate with every
		$\widetilde V\in\mathcal A_n$ meeting $\iota(E)$ a set
		$V\in\mathcal A$ such that
		\begin{equation*}
			\iota^{-1}\bigl(\widetilde V\cap\iota(E)\bigr)\subset V,
			\qquad
			|V|\le4(B+1)^{2r(n)}|\widetilde V|.
		\end{equation*}
		Since
		\(
		q_{n-1}\geq2^{(n-2)/2},
		\)
		we have
		\(
		|\widetilde V|
		\leq
		2^{-(n-2)}.
		\)
		Fix $\eta>0$. Since $r(n)=o(n)$, for all sufficiently large
		$n$,
		\[
		2r(n)\log(B+1)
		\leq
		\eta(n-2)\log2.
		\]
		Therefore, we obtain
		\begin{equation*}
			|V|
			\leq
			C_\eta|\widetilde V|^{1-\eta}
		\end{equation*}
		for every $\widetilde V\in\mathcal A$ meeting $\iota(E)$.
		
		Now let $\{\widetilde V_i\}\subset\mathcal A$ be a sufficiently
		fine cover of $\iota(E)$. By the preceding construction, there
		exist $V_i\in\mathcal A$ such that
		\[
		E\subset\bigcup_iV_i
		\quad\text{and}\quad
		|V_i|
		\leq
		C_\eta|\widetilde V_i|^{1-\eta}.
		\]
		Hence, for every $s>0$,
		\[
		\sum_i|V_i|^{s/(1-\eta)}
		\leq
		C_\eta^{s/(1-\eta)}
		\sum_i|\widetilde V_i|^s.
		\]
		Using again the faithfulness of $\mathcal A$, we conclude that
		\[
		\dim_{\rm H}E
		\leq
		\frac{\dim_{\rm H}\iota(E)}{1-\eta}.
		\]
		Letting $\eta\to0$ gives
		\begin{equation}\label{lower-dim}
			\dim_{\rm H}E
			\leq
			\dim_{\rm H}\iota(E).
		\end{equation}
		
		Finally, combining
		\eqref{upper-dim} and
		\eqref{lower-dim}, we obtain
		\[
		\dim_{\rm H}\iota(E)
		=
		\dim_{\rm H}E.
		\]
	\end{proof}
	\begin{prop}
		For every $0\leq\beta<\infty$,
		\[
		\dim_{\mathrm H}F_{0,\beta}
		=
		\frac{2}{2+\beta}.
		\]
	\end{prop}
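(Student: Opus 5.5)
The upper bound is immediate. Since $F_{0,\beta}\subset F_\beta$, Bugeaud's result recalled in \eqref{dim of Fbeta} gives $\dim_{\rm H}F_{0,\beta}\le \frac{2}{2+\beta}$. All the work is in the lower bound. For this I would use Proposition~\ref{lem-insertion}: starting from a large subset of $F_\beta$, insert the digit $1$ at a sparse set of positions. This forces $\liminf \log a_{n+1}/\psi(n)=0$, and I will check that it leaves the limsup in the definition of $F_\beta$ unchanged.

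\emph{Choosing the positions.} Since $\psi(n)\to\infty$, take $\mathcal N=\{N_k\}$ with $N_k=2^k$ (any set of density zero works) and $c_k\equiv1$. Every $x\in\iota(E)$ then has $a_{N_k}(x)=1$, so $\log a_{N_k}/\psi(N_k-1)=0$ along the inserted positions, and hence $\liminf=0$. It remains to pick $E\subset F_\beta$ with $\hdim E$ arbitrarily close to (or equal to) $\frac{2}{2+\beta}$ such that $\iota(E)\subset F_\beta$.

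\emph{Stability of $\beta$ under insertion.} Let $x=[a_1,a_2,\ldots]$ and $y=\iota(x)$. Each inserted digit is $1$, so by part (3) of Proposition~\ref{length of cylinder} we have $q_{n}(x)\le q_{\nu_n}(y)\le 2^{r(\nu_n)}q_n(x)$. For $\log a/\log q$ to be preserved we therefore need $r(\nu_n)=o(\log q_n(x))$. Since $\log q_n\ge \frac{n-1}{2}\log 2$ and $r(n)=o(n)$, this holds automatically. The positions that follow an inserted $1$ contribute the ratio $0$, and the position carrying the inserted $1$ itself contributes ratio $0$ as well. Hence $\limsup_n \log b_{n+1}/\log q_n(y)=\limsup_n\log a_{n+1}/\log q_n(x)=\beta$. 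One subtlety is that $a_{n+1}$ is now compared with $q_{\nu_{n+1}-1}(y)$, which may include an inserted digit at position $\nu_{n+1}-1$. This only multiplies $q$ by a factor of at most $2^{r}$, which is harmless. So $\iota(F_\beta)\subset F_{0,\beta}$ for every $\beta\in[0,\infty)$. The case $\beta=0$ fits the same argument, since there the limsup is at least $0$ trivially.

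\emph{Conclusion.} Take $E=F_\beta$ itself. Then $\dim_{\rm H}F_{0,\beta}\ge\dim_{\rm H}\iota(F_\beta)=\dim_{\rm H}F_\beta=\frac{2}{2+\beta}$ by Proposition~\ref{lem-insertion}. The main obstacle is the bookkeeping in the stability step, namely re-indexing $q$ and $a$ after insertion and checking that the $2^{r(n)}$ distortion is $o(\log q_n)$. I expect this to be routine given $q_n\ge 2^{(n-1)/2}$. If one wants to be careful about $\beta=0$, $F_0$ has dimension $1$, and the same argument applies verbatim.
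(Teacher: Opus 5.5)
Your proposal is correct and follows the paper's proof essentially step for step. The upper bound comes from $F_{0,\beta}\subset F_\beta$ together with Bugeaud's result, and the lower bound comes from inserting the digit $1$ on a zero-density set via Proposition~\ref{lem-insertion}, checking that $\log\widetilde q_{\nu_{m+1}-1}(y)/\log q_m(x)\to1$, and noting that the inserted $1$'s force the $\liminf$ to be $0$. One wording slip: at indices $n\in\mathcal N$, where $q_n(y)$ ends with an inserted $1$, the ratio is not $0$, since $b_{n+1}$ is then an original digit; your ``subtlety'' sentence, however, already treats exactly this case correctly.
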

	
	\begin{proof}
		For the upper bound, it is immediately that
		\[
		F_{0,\beta}\subset F_{\beta}.
		\]
		From \eqref{dim of Fbeta}, we have \[\hdim F_{0,\beta}\leq \hdim F_{\beta}\leq \frac{2}{\beta+2}.\]
		
		For the lower bound, choose a zero--density sequence
		$\mathcal N=\{N_k:k\geq1\}$ with $N_1\geq2$, and For
		$x=[a_1,a_2,\ldots]$, let
		$y=\iota_{1 }(x)=[b_1,b_2,\ldots]$ be obtained by inserting
		the digit $1$ at the positions in $\mathcal N$. Set
		\[
		\widetilde{F}_{\beta}
		:=
		\iota_{1}(  F_{\beta}).
		\]
		By Proposition \ref{lem-insertion}, we have
		\begin{equation*}
			\hdim \widetilde F_\beta
			=
			\hdim F_\beta
			=
			\frac{2}{2+\beta}.
		\end{equation*}
		We first verify that the insertion does not change the upper--limit condition.
		Follow the notion and argument of proof of Proposition \ref{lem-insertion}, we have
		\[
		\frac{
			\log\widetilde q_{\nu_{m+1}-1}(y)
		}{
			\log q_m(x)
		}
		\longrightarrow1.
		\]
		Consequently,
		\[
		\limsup_{m\to\infty}
		\frac{
			\log b_{\nu_{m+1}}(y)
		}{
			\log\widetilde q_{\nu_{m+1}-1}(y)
		}
		=
		\limsup_{m\to\infty}
		\frac{
			\log a_{m+1}(x)
		}{
			\log q_m(x)
		}
		=
		\beta.
		\]
		At the inserted positions, the corresponding numerator is
		\(\log1=0\). Hence
		\[
		\limsup_{n\to\infty}
		\frac{\log b_{n+1}(y)}
		{\log\widetilde q_n(y)}
		=
		\beta.
		\]
		
		It remains to verify the lower-limit condition. For $y\in\widetilde{F}_{\beta}$, when $n=N_k$, we have $a_{N_k}(y)=1$. Hence
		\[
		\lim\limits_{k\to\infty}\frac{\log a_{N_k}(y)}{\psi(N_k-1)}
		=0.
		\]
		It follows that
		\[
		\liminf_{n\to\infty}
		\frac{\log a_{n+1}(y)}{\psi(n)}
		=0.
		\]
		Therefore,
		\[
		\widetilde{F}_{\beta}
		\subset F_{0,\beta},
		\]
		and consequently
		\[
		\dim_{\mathrm H}F_{0,\beta}
		\geq
		\frac{2}{2+\beta}.
		\]
		This completes the proof.
	\end{proof}

	\subsection{The case $\alpha>0,\ 0\leq\beta<\infty
		~\text{and}~
		B_\psi>1+\beta$}
	By Proposition~\ref{empty}, $F_{\alpha,\beta}=\emptyset$.
	Hence $\dim_{\mathrm H}F_{\alpha,\beta}=0$.
	
	\subsection{The case $\beta=\infty$}
	
	\begin{prop}
		For every $0\leq\alpha\leq\infty$,
		\[
		\dim_{\mathrm H}F_{\alpha,\infty}=0.
		\]
	\end{prop}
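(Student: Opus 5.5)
Since $F_{\alpha,\infty}$ is a subset of the set of points with $\limsup_{n\to\infty}\frac{\log a_{n+1}(x)}{\log q_n(x)}=\infty$, it suffices to show that this larger set has Hausdorff dimension $0$. The $\liminf$ condition on $\psi$ plays no role here, so one argument covers every $0\le\alpha\le\infty$ at once.

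For any $\beta>0$, every $x$ with $\limsup_{n}\frac{\log a_{n+1}(x)}{\log q_n(x)}=\infty$ satisfies $a_{n+1}(x)\ge q_n^{\beta}(x)$ for infinitely many $n$. Hence
\[
F_{\alpha,\infty}\subset\bigcap_{\beta>0}J_\beta,
\]
where $J_\beta$ is the set in \eqref{JARNIK-1}. By Good's theorem quoted in the introduction, $\dim_{\rm H}J_\beta=\frac{2}{2+\beta}$. Monotonicity of Hausdorff dimension then gives $\dim_{\rm H}F_{\alpha,\infty}\le\frac{2}{2+\beta}$ for every $\beta>0$, and letting $\beta\to\infty$ yields $\dim_{\rm H}F_{\alpha,\infty}=0$. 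The set $F_{\alpha,\infty}$ may well be empty for some $\alpha$ and $\psi$, but we need no nonemptiness claim, since the dimension of the empty set is $0$ by convention.

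If one prefers not to quote Good's theorem as a black box, the upper bound for $J_\beta$ can be reproved directly by the covering argument of the upper-bound proposition in Section~\ref{section3}, with the restriction $a_j\ge M_{j-1}$ removed. For each $n$ and each $\boldsymbol b\in\mathbb N^n$, cover by
\[
J_n(\boldsymbol b)=\bigcup_{a\ge q_n^\beta(\boldsymbol b)}I_{n+1}(\boldsymbol b,a),
\]
which has length at most $q_n^{-(2+\beta)}(\boldsymbol b)$. The $s$-cost is then
\[
\sum_{n\ge m}\sum_{\boldsymbol b}q_n^{-s(2+\beta)}(\boldsymbol b).
\]
This sum converges for $s(2+\beta)>2$, because $\sum_{\boldsymbol b\in\mathbb N^n}q_n^{-t}$ decays geometrically in $n$ when $t>2$. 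The conclusion is $\mathcal H^s(J_\beta)=0$ for all $s>\frac{2}{2+\beta}$.

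The only step that needs any care is this convergence claim: $\sum_{\boldsymbol b\in\mathbb N^n}q_n(\boldsymbol b)^{-t}$ is summable in $n$ for $t>2$. It follows from the standard bound $q_n(\boldsymbol b)\ge\prod_k a_k$ combined with a uniform lower bound on the growth of $q_n$, namely $q_n\ge 2^{(n-1)/2}$. If needed, one applies the product bound with a slightly smaller exponent, which is absorbed using the exponential growth of $q_n$. Since this is exactly the Jarník–Besicovitch and Good upper bound already cited in the paper, I expect no genuine obstacle.
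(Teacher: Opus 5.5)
Your argument is correct and is exactly the paper's proof: $F_{\alpha,\infty}\subset J_\beta$ for every $\beta>0$, then Good's bound $\hdim J_\beta=\frac{2}{2+\beta}$, then let $\beta\to\infty$. One caveat concerns only your optional self-contained aside: the bounds $q_n\ge\prod_k a_k$ and $q_n\ge 2^{(n-1)/2}$ give only $\sum_{\boldsymbol b\in\mathbb N^n}q_n^{-t}\le\zeta(t')^n\,2^{-(n-1)(t-t')/2}$, and this does not decay for any $t\in(2,3)$, because $\zeta(t')\,2^{t'/2}>3>2^{t/2}$ for all $t'>1$. The clean route to every $t>2$ is $\sum_{\boldsymbol b}q_n^{-2}(\boldsymbol b)\le 2\sum_{\boldsymbol b}|I_n(\boldsymbol b)|\le 2$ from Proposition~\ref{length of cylinder}, although for this proposition any fixed threshold in $t$ already suffices, since $\beta\to\infty$.
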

	
	\begin{proof}
		
		If $x\in F_{\alpha,\infty}$, then
		\[
		\limsup_{n\to\infty}
		\frac{\log a_{n+1}(x)}{\log q_n(x)}
		=\infty.
		\]
		Hence, for every $M>0$,
		\[
		a_{n+1}(x)\geq q^M_n(x)
		\]
		for infinitely many $n\in\N$, and therefore
		\[
		F_{\alpha,\infty}\subset J_M,
		\]
		where $J_M$ defined in \eqref{JARNIK-1}.
		Therefore, we have
		\[
		\dim_{\mathrm H}F_{\alpha,\infty}
		\leq\dim_{\mathrm H}J_M\le
		\frac{2}{2+M}
		\]
		for every $M>0$. Letting $M\to\infty$ yields
		\[
		\dim_{\mathrm H}F_{\alpha,\infty}=0.
		\]
		
	\end{proof}

	\section{An relation to convergence and irrationality exponents}
	In this section, we conclude by relating $F_{\alpha,\beta}$ to $E_{\tau,v}$ and its sublevel counterpart $E_{\leq\tau,v}$.
	\begin{proof}[Proof of Corollary \ref{cor-Song}]
		Assume first that $0\leq\tau<\infty$. Set
		\[
		\psi(n)=\log(n+1),\qquad
		\alpha=\frac1\tau,\qquad
		\beta=v-2,
		\]
		with $\frac{1}{0}=\infty$. Then $B_\psi=1$. If
		$x\in F_{\alpha,\beta}$, the lower-limit condition implies
		$\tau(x)\leq\frac{1}{\alpha}=\tau$; this also holds when
		$\alpha=\infty$. By \eqref{irr-limsup}, $v(x)=v$.
		Thus $F_{\alpha,\beta}\subset E_{\leq\tau,v}$, and
		Theorem~\ref{thm} gives
		\[
		\dim_{\mathrm H}E_{\leq\tau,v}\geq\dim_{\mathrm H}F_{\alpha,\beta}\ge\frac1v.
		\]
		
		Conversely, $\tau(x)<\infty$ implies that
		$a_n(x)\to\infty$.
		By the same covering argument as that used in the proof of the upper
		bound in Case 1 of \cite[Theorem 1.1]{Song}, for $0<\varepsilon<v-2$, it gives
		\[
		E_{\leq\tau,v}\subset J_{v-2-\varepsilon}^{*}.
		\]
		It follows from Lemma~\ref{lem-4} that
		\[
		\dim_{\rm H} E_{\leq\tau,v}
		\leq \frac{1}{v-\epsilon}.
		\]
		Letting $\epsilon\to0$, gives the required upper bound. If $v=2$, by a result of Good \cite{Good41}, we have
		\[
		\dim_{\mathrm H}E_{\leq\tau,2}
		\leq
		\dim_{\mathrm H}\{x\in[0,1):a_n(x)\to\infty\}
		=\frac12.
		\]
		Finally, if $\tau=\infty$, then
		$E_{\leq\infty,v}=F_{v-2}$ and hence
		$\dim_{\mathrm H}E_{\leq\infty,v}=2/v$.
		This completes the proof.
	\end{proof}
	
	Although Theorem~\ref{thm} directly yields a dimension formula only for
	the joint sublevel set defined by $\tau(x)\leq \frac{1}{\alpha}$, the Cantor set $A_{\alpha,\beta}$
	constructed in Subsection \ref{cantor set} has a stronger property: every point has convergence exponent exactly
	$\frac{1}{\alpha}$ when $\psi(n)=\log(n+1)$.
	\begin{prop}
		Assume that
		\[
		\psi(n)=\log(n+1),\quad
		0<\alpha<\infty,\quad
		0\leq\beta<\infty.
		\]
		Then
		\[
		A_{\alpha,\beta}
		\subseteq
		\left\{
		x\in(0,1):
		\tau(x)=\frac1\alpha,\quad
		v(x)=2+\beta
		\right\}.
		\]
	\end{prop}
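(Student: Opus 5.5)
We have to show $v(x)=2+\beta$ and $\tau(x)=1/\alpha$ for every $x\in A_{\alpha,\beta}$.

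The first claim is immediate. Lemma~\ref{subset lem} gives $A_{\alpha,\beta}\subset F_{\alpha,\beta}$, so $\limsup_n \log a_{n+1}(x)/\log q_n(x)=\beta$, and \eqref{irr-limsup} yields $v(x)=2+\beta$. Note that $B_\psi=1\le 1+\beta$ here, so the hypothesis of that lemma holds. The substance is computing $\tau(x)$. We split this into the two inequalities $\tau(x)\le 1/\alpha$ and $\tau(x)\ge 1/\alpha$.

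\emph{Upper bound.} By \eqref{ell-properties1} and $u_n\ge L_n$ we have
\[
a_{n+1}(x)\ge u_n\ge e^{\ell_n}\ge e^{\alpha\log(n+1)}=(n+1)^{\alpha}.
\]
Hence $\sum_n a_{n+1}^{-s}\le\sum_n (n+1)^{-\alpha s}<\infty$ whenever $s>1/\alpha$, so $\tau(x)\le 1/\alpha$.

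\emph{Lower bound.} We show $\sum_n a_{n+1}^{-s}=\infty$ for every $s<1/\alpha$. For this we need $\log a_{n+1}\le(\alpha+o(1))\log n$ along a set of indices of positive density, or at least along enough indices to make the series diverge. The liminf condition alone is not enough, so the natural route is to compute $\ell_n$ explicitly for $\psi(n)=\log(n+1)$.

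\emph{Step 1: $\lambda_n$.} Since $\psi$ grows subexponentially we get $c_n=0$ for large $n$ whenever $\beta>0$. If $\beta=0$, then $c_n=\log\log(n+1)$, $\eta_n\approx \log\log n/n$, and $\delta_n\approx 1/\sqrt n$. In every case $\lambda_n\ge 1+\beta$, and $\lambda_n>1$ with $\lambda_1\cdots\lambda_{m-1}$ growing at least like $e^{c\sqrt m}$.

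\emph{Step 2: $\ell_n$.} In
\[
\ell_n=\alpha\sup_{m\ge n}\log(m+1)\prod_{j=n}^{m-1}\lambda_j^{-1},
\]
moving from $m$ to $m+1$ multiplies the candidate by $\frac{\log(m+2)}{\log(m+1)}\lambda_m^{-1}$. We have $\frac{\log(m+2)}{\log(m+1)}=1+O\!\left(\frac{1}{m\log m}\right)$ while $\lambda_m\ge e^{1/\sqrt m}$, so this factor is $<1$ for all large $m$. Hence the supremum is attained at $m=n$ for $n$ large, i.e.\ $\ell_n=\alpha\log(n+1)$ eventually, and $t_n=n$ for $n$ large.

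\emph{Step 3: divergence off $\mathcal N$.} For $n\notin\mathcal N$ we have $u_n=L_n=(n+1)^\alpha$, so
\[
a_{n+1}<2\lceil u_n\rceil\le 4(n+1)^\alpha .
\]
Because $t_n=n$ eventually, the $r_k$ are eventually consecutive integers. Therefore $\mathcal N=\{r_{2^k}\}$ is lacunary: it has zero density, and indeed only $O(\log n)$ elements up to $n$. Then for $s<1/\alpha$,
\[
\sum_{n\notin\mathcal N}a_{n+1}^{-s}\ge 4^{-s}\sum_{n\notin\mathcal N}(n+1)^{-\alpha s}=\infty,
\]
since $\alpha s<1$ and removing a zero-density set does not affect divergence of $\sum n^{-\alpha s}$. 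This gives $\tau(x)\ge 1/\alpha$.

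The main obstacle is Step 2: verifying that the envelope $\ell_n$ collapses to $\alpha\psi(n)$ eventually. This requires the quantitative lower bound $\delta_n\ge n^{-1/2}$ to beat the logarithmic ratio, and that bound comes directly from the construction. Even without exact equality, the same comparison bounds the supremum by $\alpha\log(n+1)$ times a factor tending to $1$, which still suffices.
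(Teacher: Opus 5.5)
Your proposal is correct and follows the paper's proof essentially step for step. You get $v(x)=2+\beta$ from Lemma~\ref{subset lem} and \eqref{irr-limsup}, and the eventual identity $L_n=(n+1)^\alpha$ by comparing $\lambda_m\ge e^{1/\sqrt m}$ with $\log(m+2)/\log(m+1)$. You use $a_{n+1}\ge(n+1)^\alpha$ for $\tau(x)\le1/\alpha$, and divergence of the series over $n\notin\mathcal N$ for $\tau(x)\ge1/\alpha$. The only differences are cosmetic: you test $s<1/\alpha$ rather than $s=1/\alpha$, and you derive sparsity of $\mathcal N$ from the $r_k$ being eventually consecutive, whereas the paper simply uses $N_k=r_{2^k}\ge 2^k$, which holds without Step~2.
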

	\begin{proof}
		By Lemma~\ref{subset lem} and \eqref{irr-limsup}, $v(x)=2+\beta$ for every
		$x\in A_{\alpha,\beta}$.
		
		For the specific sequence $\lambda_n$ chosen in the construction,
		$\lambda_n\geq e^{1/\sqrt n}>
		\psi(n+1)/\psi(n)$ for all sufficiently large $n$.
		Hence the supremum defining $\ell_n$ is attained at $m=n$
		for all sufficiently large $n$, and therefore
		$L_n=(n+1)^\alpha$.
		
		For every $n$, the construction gives
		$a_{n+1}(x)\geq L_n\geq(n+1)^\alpha$.
		Thus $\sum_n a^{-s}_{n+1}(x)<\infty$ whenever
		$s>\frac{1}{\alpha}$, so $\tau(x)\leq\frac{1}{\alpha}$.
		
		On the other hand, for all sufficiently large
		$n\notin\mathcal N$, we have $u_n=L_n$ and hence
		\[
		a_{n+1}(x)
		<2\lceil(n+1)^\alpha\rceil
		\leq4(n+1)^\alpha.
		\]
		Since $N_k=r_{2^k}\geq2^k$, we have
		$\sum_{n\in\mathcal N}(n+1)^{-1}<\infty$. Consequently,
		\[
		\sum_{n=1}^{\infty}a_{n+1}(x)^{-1/\alpha}
		\geq
		4^{-1/\alpha}
		\sum_{n\notin\mathcal N}(n+1)^{-1}
		=\infty.
		\]
		Therefore, $\tau(x)\geq\frac{1}{\alpha}$.
		This completes the proof.
	\end{proof}
	
	\medskip
	\noindent\textbf{Acknowledgment.}
	W. Cheng is supported by the National Natural Science Foundation of China (No. 12371086, 12271175) and the National Key R\&D Program of China (No. 2024YFA1013700). J. Feng  is supported by National Natural Science Foundation of China (No. 12501114).

	{}
	
\end{document}